\documentclass[10 pt, reqno]{amsart}

\usepackage[utf8]{inputenc}
\usepackage{amsmath, amssymb, amsthm}
\usepackage{mathrsfs}
\usepackage{booktabs} 
\usepackage[margin=1in]{geometry} 
\usepackage{hyperref} 
\usepackage{xurl} 
\usepackage{enumitem} 

\newcommand{\M}{\mathcal{M}}

\newcommand{\lk}{\lambda_k}
\newtheorem{theorem}{Theorem}[section]
\newtheorem{lemma}[theorem]{Lemma}
\newtheorem{proposition}[theorem]{Proposition}
\newtheorem{corollary}[theorem]{Corollary}
\theoremstyle{definition}
\newtheorem{definition}{Definition}

\newtheorem{remark}[theorem]{Remark}
\numberwithin{equation}{section}

\newcommand{\MMM}{\mathcal{M}}

\newcommand{\lbtheta}{(1-c_\MMM r)}

\newcommand{\norm}[1]{\left\lVert#1\right\rVert}

\usepackage{tikz}
\begin{document}

\title{Dirichlet Eigenvalue Approximation on Manifolds with Cylindrical Boundary }
\author{Anusha Bhattacharya}
\address{Department of Mathematical Sciences, Indian Institute of Science Education and Research Mohali, \newline Sector 81, SAS Nagar, Punjab- 140306, India.}
\email{ph21059@iisermohali.ac.in}
\begin{abstract}
We prove that the Dirichlet eigenvalues of the Laplace–Beltrami operator on a compact Riemannian manifold with cylindrical boundary can be approximated by the spectrum of truncated graph Laplacians constructed from $(\varepsilon,\rho) $-proximity graphs on the manifold. The approximation is uniform over a class $\mathcal{M}$ of manifolds,  characterized by bounds on Ricci curvature, a lower bound on the injectivity radius, and an upper bound on the diameter. We show that the $k$-th  eigenvalue of the truncated graph Laplacian lies between  $k$-th Dirichlet eigenvalues of truncated domains of the manifold. As the parameters $\varepsilon, \rho$ and the ratio $\frac{\varepsilon}{\rho}$ tend to zero, these estimates yield convergence of the eigenvalues of the truncated graph Laplacian  to the Dirichlet eigenvalues of the Laplace–Beltrami operator. 

\end{abstract}
\maketitle
\section{Introduction}

A fundamental problem in spectral geometry is to approximate the spectrum of the Laplace-Beltrami operator on a Riemannian manifold by finite discrete models. Given a finite sampling of points from the manifold, a natural approach is to construct a weighted graph and an associated graph Laplacian on the discrete data. Graph Laplacians are extensively used in unsupervised learning tasks, like dimensionality reduction \cite{belkin2003, coifman},  semi-supervised learning \cite{Belkin2004, calder2020} and spectral clustering \cite{andrew, vonLuxburg2008}. The convergence of learning algorithms in these settings depends on spectral convergence of the graph Laplacian to the Laplace-Beltrami operator.

\par Existing results on spectral convergence can broadly be divided into probabilistic and deterministic settings. For points sampled according to a known probability measure on the manifold, spectral convergence of graph Laplacian in probability has been studied in \cite{NIPS2006_5848ad95, trillos2020}. For manifolds with boundary, Peoples and Harlim \cite{peoples} studied the Dirichlet eigenvalue problem and proved spectral convergence of graph Laplacians with high probability as the number of uniformly sampled points increases. 

\par Several works \cite{aubry2013approximation, fujiwara, tatiana},  have established spectral convergence in the deterministic setting. Burago, Ivanov, and Kurylev  \cite{burago} proved spectral convergence of graph Laplacians constructed from $\varepsilon$-nets on compact Riemannian manifolds without boundary under uniform bounds on sectional curvature, injectivity radius, and diameter. Their methods were later extended in joint work with the author \cite{MaityBhattacharya2026} to a broader class of Riemannian manifolds by replacing sectional curvature bounds with bounds on Ricci curvature. Extending these spectral approximation results to manifolds with boundary introduces additional challenges, as neighborhoods of the boundary, although small in volume, can significantly influence the associated energy estimates \cite{pmlr-v23-belkin12}. Lu \cite{lu} has derived spectral approximations of the Laplace-Beltrami operator with the Neumann boundary condition using reflected geodesics and the techniques of  \cite{burago}.

\par In this paper, we approximate the Dirichlet spectrum of Laplace-Beltrami operator on compact Riemannian manifolds with cylindrical boundaries using truncated graph Laplacians on suitable discretizations of the manifold. The cylindrical boundary induces a product structure in a neighborhood of the boundary, which allows us to construct  discretizations that  control  energy estimates near the boundary. We can then apply the variational techniques for spectral approximation developed in \cite{burago}. 

\begin{definition}
Let $(M,g)$ be a compact Riemannian manifold with boundary $\partial M$.  $M$ has a \emph{cylindrical boundary of width $s_0>0$} if there exists a neighborhood $U$ of $\partial M$ in $M$ such that $(U,g)$ is isometric to the product manifold $(\partial M \times [0,s_0],\, g_{\partial M} + dt^2)$, where $g_{\partial M}$ denotes the induced metric on $\partial M$ and $t$ denotes the coordinate on $[0,s_0]$.
\end{definition}

Consider the class of $\M$ of  $ n$-dimensional smooth Riemannian manifolds  with cylindrical boundary of width greater than $s_0$ satisfying:
 $$|Ric|\leq \lambda, \quad diam \leq D, \quad {\rm and} \quad inj\geq i_0$$
 where $inj$ and $diam$ denote the injectivity radius and diameter of the manifold, respectively. For $M\in \M$ and $t\le s_0$, we consider truncated domains 
\[M_t=\{x\in M| d(x,\partial M)\ge t\}\]
where $\partial M$ denotes the boundary of $M$.\\

\subsection{Discretization Scheme} \label{discretization_scheme}

Let $\partial X_0 \subset \partial M$ be a finite $\varepsilon$-net of the boundary, and let $\nu$ denote the inward unit normal vector field along $\partial M$. We project $\partial X_0$ on  $\partial M_t$ along the normal direction $\nu$. Precisely, for $t>0$ we define
\[
\partial X_t = \{ \exp_x(t\nu_x) : x \in X_0 \}.
\]
We observe that $\partial X_t \subset \partial M_t$. Fix $0<t+\varepsilon<s_0$. We construct an $\varepsilon$-net $X$ in $M$ as follows. 
Let $X_{{t+\varepsilon}}$ be an $\varepsilon$-net on $M_{t+\varepsilon} $ constructed by extending  $\partial X_{t+\varepsilon}$ to an the interior of $M_{t+\varepsilon}$. 
For $k\in\mathbb{N}\cup \{0\}$ with $t-k\varepsilon\ge0$, we define

\[
X = X_{t+\varepsilon} \bigsqcup \partial X_{t+\varepsilon} 
\bigsqcup_{k:\, t-k\varepsilon>0} \partial X_{t-k\varepsilon}.
\]

 The following figure illustrates the discretization of the cylindrical region $\partial M \times [0,t]$.
\begin{center}

\begin{tikzpicture}[scale=0.5]

\def\eps{1}
\def\nx{7}   
\def\ny{3}   
\draw[thick] (0,0) rectangle (\nx*\eps,\ny*\eps);

\foreach \i in {1,2,3,4,5,6} {
    \draw (\i*\eps,0) -- (\i*\eps,\ny*\eps);
}

\foreach \j in {1,2} {
    \draw (0,\j*\eps) -- (\nx*\eps,\j*\eps);
}

\foreach \i in {0,1,2,3,4,5,6,7} {
    \foreach \j in {0,1,2,3} {
        \fill (\i*\eps,\j*\eps) circle (2pt);
    }
}

\node[left] at (0,0.5*\eps) {$\varepsilon$};
\node[left] at (0,1.5*\eps) {$\varepsilon$};
\node[left] at (0,2.5*\eps) {$\varepsilon$};

\node[above] at (0.5*\nx*\eps,\ny*\eps+0.4) {$\vdots$};
\node[above] at (0.5*\nx*\eps,\ny*\eps+1.5) {$\partial M$};
\node[below] at (0.5*\nx*\eps,-0.4) {$\vdots$};
\node[below] at (0.5*\nx*\eps,-1.8) {$s_0$};

\end{tikzpicture}
\end{center}
The layer $\partial X_{t+\varepsilon}$ is included so that the discretization of functions introduced in the subsequent sections, which are compactly supported in $M_{t+\frac{\varepsilon}{2}}$, vanishes on $\partial X_t$. Consequently, the resulting discrete functions satisfy the Dirichlet boundary condition on the truncated graph.

\begin{definition}[\bf{$(\varepsilon,\rho)$-proximity graph}] Given any $0<\varepsilon<3\rho<t+\varepsilon<s_0$, a finite graph $\Gamma^t_{\varepsilon,\rho}$ with the vertex set $ X=\{x_i\}_{i=1}^N$ from \ref{discretization_scheme} and edges ${e_{ij}}$ is called an $(\varepsilon,\rho)$-proximity graph of $(M,g)$ if the following conditions hold. 
\begin{itemize}
        \item $M$ can be partitioned into measurable subsets using a Voronoi decomposition $\{V_i\}_{i=1}^N$ on $X$ such that $V_i\subset B_{\varepsilon}(x_i)$ and $M=\bigcup B_{\varepsilon}(x_i)$ where $B_{\varepsilon}(x_{i})$ is the ball in $M$ centered at $x_{i}$ with radius $\varepsilon$. 
        \item Two vertices $x_i$ and $x_j$ are connected by an edge $e_{ij}$ if the Riemannian distance between them is less than $\rho.$
        \item {\it Measure on $\Gamma^t_{\varepsilon,\rho}$}:  Let $\mu_{i}=\text{vol}(V_{i}).$  $X$ can then be equipped with a discrete measure $\mu=\sum_{i=1}^{N}\mu_{i}\delta_{x_{i}}$ where $\delta_{x_i}$ denotes the Dirac measure at $x_i.$
         \item \textit{Weights on edges}: Let $\nu_{n}$ be the volume of the unit ball in the Euclidean $n$-space. To an edge $e_{ij}$, assign the weight 
    \begin{equation*}
        w_{ij}=\frac{2(n+2)}{\nu_{n}\rho^{n+2}}\mu_{i}\mu_{j}.
    \end{equation*}
        \end{itemize}
        \end{definition}
\begin{remark}
Since the boundary neighborhood is isometric to a product cylinder, the Voronoi cells in the cylindrical region have a product structure.

Precisely, let $W_i^0$ denote the Voronoi cell on  $\partial M$ corresponding to $x_i \in \partial X_0$. Then taking the projection of $W_i^0$ along $\nu$ on $\partial M_{t'}$, we define $W_i^{t'}=\exp(t'\nu(W_i^0))$, for all $k\in \mathbb{N}\cup\{0,-1\}$ such that $t'= t-k\varepsilon>0$. 
Then the associated Voronoi cell of $\exp_{x_i}(t'\nu_{x_i})$in $M$ is given by
\[
V_i^{t'} = W_i^{t'} \times \left[-\frac{\varepsilon}{2},\frac{\varepsilon}{2}\right].
\]

The structure of the Voronoi cells near the boundary is essential for controlling the energy estimates. 
\end{remark}
The inner product on the  space $L^2(X_t)=L^2(X_t,\mu)$ is given by
\[\langle u,v\rangle= \langle u,v\rangle_{L^2(X_t)}=\sum_{x_i\in X_t}\mu_i u(x_i)v(x_i).\]

\begin{definition}
 The truncated graph Laplacian $\Delta_t: L^2(X_t)\rightarrow L^2(X_t)$ is defined as
\[
(\Delta_t u)(x_i)
=
\frac{1}{\mu_i}
\sum_{x_j\in X: x_i\sim x_j} w_{ij}\big(u(x_j)-u(x_i)\big).
\]
\end{definition}
We observe that $-\Delta_t$ is a self-adjoint operator on $L^2(X_t)$ and is non-negative definite.  \\
\textbf{Convention.}
We identify functions $u : X_t \to \mathbb{R}$ with their extension by zero to $X$. In particular, $u(x_j)=0$ whenever $x_j \notin X_t$. Similarly, we identify $L^2(M_t)$ with the subspace of $L^2(M)$ consisting of functions extended by zero outside $M_t$. Throughout the paper, $c_n$ and $C_{\mathcal{M}}$ denote constants depending only on $n$ and the class $\mathcal{M}$.

\par The following theorem is the main result of the paper. It provides two-sided comparison estimates between eigenvalues of the truncated graph Laplacian and Dirichlet eigenvalues of suitable truncated
domains of the manifold.

\begin{theorem}
\label{thm:main}
For $0 < \varepsilon < \rho < t < s_0$,  consider a manifold $M\in \M$ . Let $\lk(M_{t+\varepsilon})$ and $\lk(M_{t+2\varepsilon-\rho})$ denote the Dirichlet eigenvalues of the Laplace-Beltrami operator restricted to the truncated domains $M_{t+\varepsilon}$ and $M_{t+2\varepsilon-\rho}$ of $M$ respectively, and $\lk(\Gamma^t_{\varepsilon,\rho})$ and $\lk(M_t)$ be the eigenvalues of $-\Delta_t$ and $-\Delta_M$ with the Dirichlet boundary condition. Then, there exist a constant $C_\M>0$ such that for any $\rho,\frac{\varepsilon}{\rho}<\frac{1}{C_\M}$,

\[
\left(1-C_{\M,k}\left(\frac{\varepsilon}{\rho}+\rho\right)\right)\lk(M_{t+2\varepsilon-\rho})\le\lk(\Gamma^t_{\varepsilon,\rho})\le \left(1+C_{\M,k}\left(\frac{\varepsilon}{\rho}+\rho\right)\right)\lk(M_{t+\varepsilon}).
\]
\end{theorem}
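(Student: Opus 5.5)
We follow the variational scheme of Burago--Ivanov--Kurylev, as adapted to Ricci bounds in the earlier joint work. Both inequalities come from the min-max principle, using a pair of comparison maps between function spaces. The first is a discretization operator $P:L^2(M)\to L^2(X)$ given by $Pf(x_i)=\mu_i^{-1}\int_{V_i}f$. The second is an interpolation operator $I:L^2(X_t)\to \mathrm{Lip}(M)$ given by $I=\Lambda_{\rho-2\varepsilon}\circ P^*$, where $P^*u=\sum_i u(x_i)\mathbf 1_{V_i}$ and $\Lambda_r$ is the radial smoothing operator with kernel $\psi(d(x,y)/r)$, normalized by $\theta(x)=\int\psi(d(x,y)/r)\,dy$. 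The graph Dirichlet form is
\[
\delta(u)=\tfrac12\sum_{i\sim j}w_{ij}(u_j-u_i)^2 ,
\]
where $u$ is extended by zero off $X_t$. The main idea is that all estimates are local at scale $\rho$. Relative to the closed case, what is new is the support bookkeeping and the product structure near $\partial M_t$.

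\textbf{Upper bound.} Take the span $L$ of the first $k$ Dirichlet eigenfunctions of $M_{t+\varepsilon}$, extended by zero. For $f\in L$:
\begin{itemize}
\item The support of $f$ lies in $M_{t+\varepsilon}$. Since the layer $\partial X_{t+\varepsilon}$ is included and the cells have the product form $W\times[-\varepsilon/2,\varepsilon/2]$, every cell meeting $M_{t+\varepsilon}$ has its centre in $X_{t+\varepsilon}\cup\partial X_{t+\varepsilon}\subset X_t$, and $Pf$ vanishes on all centres outside $X_t$. So $Pf\in L^2(X_t)$ satisfies the graph Dirichlet condition.
\item We then prove two inequalities:
\[
\delta(Pf)\le(1+C(\varepsilon/\rho+\rho))\|df\|^2,
\]
\[
\bigl|\|Pf\|^2-\|f\|^2\bigr|\le C\varepsilon\|f\|\,\|df\|.
\]
\item The first comes from the bound
\[
(Pf(x_i)-Pf(x_j))^2\le \frac{1}{\mu_i\mu_j}\int_{V_i}\int_{V_j}|f(x)-f(y)|^2 .
\]
Summing gives $\frac{2(n+2)}{\nu_n\rho^{n+2}}\iint_{d(x,y)<\rho+2\varepsilon}|f(x)-f(y)|^2$. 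Along minimal geodesics this is at most $\|df\|^2$ times the normalized volume ratio of $\rho$-balls.
\item The ratio is controlled by Bishop--Gromov with $|Ric|\le\lambda$ together with the injectivity radius bound. The almost-Euclidean volume $|B_r|=\nu_n r^n(1+O(r^2))$, from harmonic-coordinate $C^{1,\alpha}$ control, yields the factor $1+C\rho$. The enlargement from $\rho$ to $\rho+2\varepsilon$ yields $C\varepsilon/\rho$.
\item Near $\partial M$ the balls may leave $M$, but this is harmless: $f$ vanishes within distance $t+\varepsilon>\rho$ of $\partial M$.
\end{itemize}
Min-max then gives the right-hand inequality, using $\lambda_k(M_{t+\varepsilon})\le\lambda_k(M_s)$, $s\le t$, to absorb $\varepsilon\sqrt{\lambda_k}$ into $C_{\M,k}$.

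\textbf{Lower bound.} Take the span $L$ of the first $k$ eigenvectors of $-\Delta_t$ and apply $I$.
\begin{itemize}
\item Support: $P^*u$ is supported in $\bigcup_{x_i\in X_t}V_i\subset M_{t-\varepsilon/2}$. Smoothing at radius $r=\rho-2\varepsilon$ enlarges the support by at most $r$, so $Iu$ lies in $M_{t-\varepsilon/2-\rho+2\varepsilon}\subset M_{t+2\varepsilon-\rho}$ (up to adjusting constants). Hence $Iu$ is an admissible test function for the Dirichlet problem on $M_{t+2\varepsilon-\rho}$.
\item We need two estimates:
\[
\|d(Iu)\|^2\le(1+C(\varepsilon/\rho+\rho))\delta(u),
\]
\[
\bigl|\|Iu\|^2-\|u\|^2\bigr|\le C\rho\|u\|\,\delta(u)^{1/2}.
\]
\item For the first, differentiate the kernel and apply Cauchy--Schwarz with the first-moment identity $\int\psi'(|v|/r)\langle v,\xi\rangle\,dv$ computed in normal coordinates. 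This gives $|d\Lambda_r f(x)|^2\le \frac{n+2}{\nu_n r^{n+2}}\,\cdots\int_{B_r(x)}|f(y)-f(x)|^2$ up to $(1+C r)$. The Ricci-curvature version of the error terms (the function $\theta$ and its derivative, Jacobian of $\exp$) comes from the earlier work.
\item The second estimate is a Poincaré-type comparison of $u$ with its local averages over $\rho$-neighbourhoods.
\end{itemize}
Min-max on $M_{t+2\varepsilon-\rho}$ then gives the left inequality, after first using the upper bound to ensure $\delta(u)\le C_k\|u\|^2$ so that $I$ is injective on $L$.

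\textbf{Main obstacle.} We expect the main obstacle to be the regions where balls of radius $\rho$ leave $M$ or cross the layered part of the net. There the normalization $\theta$ is no longer the interior value, and the Voronoi cells are not near-Euclidean balls. We would use the product structure: within distance $s_0$ of $\partial M$ the metric is exactly $g_{\partial M}+dt^2$, so balls are products to leading order. Moreover, every function in play is supported at distance at least $t-\varepsilon/2-\rho>0$ from $\partial M$, so the estimates are only needed on balls contained in the cylinder or in $M$. Plausibly this is where the hypothesis $\rho<t$ is used, so that the interior estimates from the closed-manifold case apply verbatim.
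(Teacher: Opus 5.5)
Your strategy is the paper's: $P$ for the upper bound, $I=\Lambda_{\rho-2\varepsilon}P^*$ for the lower bound, and min--max in both directions. The paper takes eigenfunctions on $M_{t+\varepsilon/2}$ and then applies domain monotonicity, which is immaterial. However, the boundary bookkeeping, which you correctly identify as the only new ingredient, contains a step that fails.

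\textbf{The support inclusion in the lower bound is reversed.} You let graph functions be nonzero on the outer layer $\partial X_t$, whose cells are $W\times[t-\varepsilon/2,t+\varepsilon/2]$. Then $\operatorname{supp}P^*u\subset M_{t-\varepsilon/2}$ and $\operatorname{supp}Iu\subset M_{t+3\varepsilon/2-\rho}$. Since $M_s$ shrinks as $s$ grows, $M_{t+3\varepsilon/2-\rho}\supsetneq M_{t+2\varepsilon-\rho}$. So $Iu$ is not an admissible Dirichlet test function on $M_{t+2\varepsilon-\rho}$. Min--max then only gives $\lambda_k(\Gamma^t_{\varepsilon,\rho})\ge(1-C(\varepsilon/\rho+\rho))\lambda_k(M_{t+3\varepsilon/2-\rho})$. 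This is weaker than the claim, since $\lambda_k(M_{t+3\varepsilon/2-\rho})\le\lambda_k(M_{t+2\varepsilon-\rho})$. ``Up to adjusting constants'' does not close this without a separate quantitative comparison of the two truncations, which you do not supply.

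In the paper, $u\in H^1_0(X_t)$ must vanish on the layer $\partial X_t$ itself; that is exactly why the extra layer $\partial X_{t+\varepsilon}$ is put into the net. Then $P^*u$ is supported in $M_{t+\varepsilon/2}$, and $\operatorname{supp}Iu\subset M_{t+5\varepsilon/2-\rho}\subset M_{t+2\varepsilon-\rho}$. Your upper-bound argument in fact shows that $Pf$ vanishes on $\partial X_t$, so nothing is lost there. Alternatively, keep your convention and smooth at radius $r=\rho-5\varepsilon/2$. Since $r+2\varepsilon<\rho$, you still have $B_r(x)\subset U(x)$, and the only loss is a factor $(\rho/r)^{n+2}=1+O(\varepsilon/\rho)$.

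\textbf{Two further steps need correcting.}

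First, the monotonicity you use to absorb $\varepsilon\sqrt{\lambda_k}$ goes the wrong way. For $s\le t$, $M_{t+\varepsilon}\subset M_s$ gives $\lambda_k(M_s)\le\lambda_k(M_{t+\varepsilon})$, which is a lower bound. What you need is an upper bound on $\lambda_k(M_{t+\varepsilon})$ depending only on $\M$ and $k$. It is needed here, for injectivity of $P$, and, through the upper bound, for injectivity of $I$. The paper obtains it from Cheng's eigenvalue comparison theorem.

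Second, $\rho<t$ is not what makes the interior estimates apply verbatim. Points of $\operatorname{supp}Iu$ lie only about $t-\rho$ from $\partial M$. The estimates $1-c_\M r\le\theta_r\le 1+c_\M r$, $|\nabla\theta_r|\le c_\M$, and the gradient formula for $k_r$ all require $B_{\rho-2\varepsilon}(x)\cap\partial M=\emptyset$ at those points, which means roughly $t>2\rho$. If the smoothing ball meets $\partial M$, $\theta_r$ is no longer $1+O(r)$ and $|\nabla\theta_r|$ is of order $1/r$. The sharp constant in the bound for $\|d(Iu)\|$ is then lost. The paper's lemmas assume $2\rho<t$ or $3\rho<t$, and its definition of the proximity graph builds in $3\rho<t+\varepsilon$.
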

The comparison estimates in the theorem show that eigenvalues of the truncated graph Laplacian are controlled by the Dirichlet eigenvalues of two nearby truncated domains of the manifold. In the proximity graph $\Gamma_{\varepsilon,\rho}^t$, the parameter $\varepsilon$ controls density of point sampling, while $\rho$ determines the scale of  local interactions in the graph. As these parameters decrease, the discretization becomes finer and the discrete Dirichlet energy of functions (\ref{eq:discrete_dirichlet_energy}) supported on $X_t$ increasingly reflects the Dirichlet energy on $M_t$. Consequently, eigenvalues of the truncated graph Laplacian converge to Dirichlet eigenvalues of the truncated manifold.
\begin{corollary}
\label{corollary}
For each $k\in\mathbb{N}$,
\[
\lambda_k(\Gamma_{\varepsilon,\rho}^t)\to\lambda_k(M_t)
\quad\text{as }\varepsilon,\rho\to0,\ \frac{\varepsilon}{\rho}\to0.
\]
\end{corollary}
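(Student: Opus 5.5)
The plan is to deduce the corollary from Theorem~\ref{thm:main} by a squeeze argument. Two facts are needed. First, the prefactors $1\pm C_{\M,k}(\frac{\varepsilon}{\rho}+\rho)$ tend to $1$. Second, the Dirichlet eigenvalues $\lk(M_{s})$ depend continuously on the truncation parameter $s$ near $s=t$. Fix $k$ and $t\in(0,s_0)$. We take $\varepsilon,\rho\to0$ with $\frac{\varepsilon}{\rho}\to 0$. Eventually $\rho,\frac{\varepsilon}{\rho}<\frac{1}{C_\M}$ and $0<\varepsilon<\rho<t$, so the two-sided estimate of Theorem~\ref{thm:main} holds. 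In this regime $t+\varepsilon\to t$ and $t+2\varepsilon-\rho\to t$, and since $\varepsilon<\rho$ we have $t+2\varepsilon-\rho\in(t-\rho,t+\varepsilon)$. It therefore suffices to show that $s\mapsto\lk(M_s)$ is continuous at $s=t$.

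The continuity comes from the cylindrical structure. For $s\in[0,s_0)$, the domain $M_s$ is $M\setminus(\partial M\times[0,s))$. I would build an explicit bi-Lipschitz diffeomorphism $\Phi_{s}:M_t\to M_s$. It is the identity on $M_{s_0'}$ for some fixed $s_0'$ with $\max(t,s)<s_0'<s_0$. On the collar $\partial M\times[t,s_0']$ it acts as $(y,r)\mapsto\big(y,\phi_s(r)\big)$, where $\phi_s:[t,s_0']\to[s,s_0']$ is the affine bijection fixing $s_0'$. Because the metric on the collar is the product $g_{\partial M}+dr^2$, the pullback metric is $g_{\partial M}+\phi_s'(r)^2dr^2$. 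Here $\phi_s'=\frac{s_0'-s}{s_0'-t}=1+O(|s-t|)$. Consequently $\Phi_s$ distorts the Dirichlet energy and the $L^2$ norm by factors $1+O(|s-t|)$, with constants depending only on $t$ and $s_0'$. Composition with $\Phi_s$ maps $H^1_0(M_s)$ bijectively onto $H^1_0(M_t)$. Plugging this into the min--max characterization of $\lk$ over $k$-dimensional subspaces gives
\[
\big(1-C|s-t|\big)\lk(M_t)\le \lk(M_s)\le\big(1+C|s-t|\big)\lk(M_t).
\]
This is the required continuity. For $s>t$ there is a cruder argument: domain monotonicity, coming from the inclusion $H^1_0(M_s)\subset H^1_0(M_t)$, gives $\lk(M_t)\le\lk(M_s)$. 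The map $\Phi_s$ is still needed for the opposite inequality and for the case $s<t$.

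Finally, combining the two facts gives
\[
\lk(\Gamma^t_{\varepsilon,\rho})\le(1+o(1))\lk(M_{t+\varepsilon})\to\lk(M_t),\qquad
\lk(\Gamma^t_{\varepsilon,\rho})\ge(1-o(1))\lk(M_{t+2\varepsilon-\rho})\to\lk(M_t),
\]
which proves the corollary. Note that $C_{\M,k}$ depends on $k$ but not on $\varepsilon$ or $\rho$, and $\lk(M_s)$ stays bounded uniformly for $s$ near $t$ by the continuity estimate, so the $o(1)$ prefactor errors do vanish.

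The main obstacle is the continuity step, in particular the case $s<t$, where $M_{t+2\varepsilon-\rho}$ is strictly larger than $M_t$. That step is exactly where the product structure of the collar is used. I would handle it with the explicit radial reparametrization described above, rather than with a general domain-perturbation theorem.
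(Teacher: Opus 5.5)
Your proposal is correct. Its outer structure is the same as the paper's: a squeeze via Theorem~\ref{thm:main}, followed by continuity of $s\mapsto\lambda_k(M_s)$ at $s=t$. The difference lies in how you obtain that continuity.

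The paper argues qualitatively. Domain monotonicity gives one inequality on each side of $t$, and the other inequality comes from multiplying eigenfunctions by cut-off functions and applying min--max. You instead use the product collar to build the stretching map $\Phi_s$. This gives the two-sided bound $(1\pm C|s-t|)$ and treats $s>t$ and $s<t$ symmetrically.

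Your route buys two things.

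First, it is quantitative. Since $|(t+\varepsilon)-t|=\varepsilon$ and $|(t+2\varepsilon-\rho)-t|<\rho$, combining your estimate with Theorem~\ref{thm:main} yields an explicit rate $|\lambda_k(\Gamma^t_{\varepsilon,\rho})-\lambda_k(M_t)|\le C\,(\frac{\varepsilon}{\rho}+\rho)$, with $C$ independent of $\varepsilon$ and $\rho$.

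Second, it handles the outer approximation cleanly. Once $2\varepsilon<\rho$, the domain $M_{t+2\varepsilon-\rho}$ strictly contains $M_t$. On that side the cut-off argument is not immediate, because you must cut off eigenfunctions of the varying larger domains $M_s$. That requires either uniform control of those eigenfunctions near $\partial M_s$, or a weak-compactness argument using the regularity of $\partial M_t$, and the paper does not spell this out. The cut-off argument is straightforward only for the inner approximation $s\downarrow t$, where density of compactly supported functions in $H^1_0(M_t)$ suffices.

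The paper's route, in turn, does not use the product structure at this step. In principle it therefore applies to general monotone families of smooth domains, whereas your $\Phi_s$ relies on the cylindrical collar.
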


The truncated domains $M_t$ are obtained by removing a boundary layer of width $t$ from the manifold. As $t\to 0$, these domains exhaust $M$. Hence, the Dirichlet eigenvalues on $M_t$  converge to the  Dirichlet eigenvalues on $M.$
\begin{proposition}
\label{proposition}
 For each $k\in\mathbb{N}$,
\[
\lambda_k(M_t)\to\lambda_k(M)
\quad\text{as }t\to0.
\]   
\end{proposition}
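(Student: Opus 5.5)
We prove Proposition~\ref{proposition} with the min-max characterization of Dirichlet eigenvalues:
\[
\lambda_k(\Omega)=\min_{\substack{L\subset H^1_0(\Omega)\\ \dim L=k}}\ \max_{0\ne u\in L}\frac{\|\nabla u\|^2_{L^2}}{\|u\|^2_{L^2}}.
\]
The inclusion $H^1_0(M_t)\subset H^1_0(M)$ (extension by zero) immediately gives the monotonicity $\lambda_k(M)\le\lambda_k(M_t)$. It also shows that $t\mapsto\lambda_k(M_t)$ is nondecreasing in $t$. So it suffices to show $\limsup_{t\to0}\lambda_k(M_t)\le\lambda_k(M)$. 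The cylindrical boundary lets us do this with an explicit comparison map rather than an abstract density argument.

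\textbf{Step 1: a bi-Lipschitz map $\Phi_t:M_t\to M$.} For $t<s_0/2$, take $\Phi_t$ to be the identity on $M_{s_0}$. On the collar $\partial M\times[t,s_0]$ set
\[
\Phi_t(y,s)=\bigl(y,\phi_t(s)\bigr),\qquad \phi_t(s)=\frac{s_0(s-t)}{s_0-t},
\]
which is an affine bijection $[t,s_0]\to[0,s_0]$ fixing $s_0$. Because the metric on the collar is the product $g_{\partial M}+dt^2$, $\Phi_t$ is a diffeomorphism $M_t\to M$ that is Lipschitz on both sides. Only the normal direction is stretched, by the factor $\phi_t'=s_0/(s_0-t)\to1$. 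The Jacobian is likewise $s_0/(s_0-t)$ on the collar and $1$ elsewhere.

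\textbf{Step 2: transplanting test functions.} For $u\in H^1_0(M)$, put $u_t=u\circ\Phi_t\in H^1_0(M_t)$. Then $|\nabla u_t|^2\le (\phi_t')^2|\nabla u|^2\circ\Phi_t$ pointwise. A change of variables gives
\[
\|\nabla u_t\|^2\le \frac{s_0}{s_0-t}\|\nabla u\|^2,\qquad \|u_t\|^2\ge\frac{s_0-t}{s_0}\|u\|^2 .
\]
Since $u\mapsto u_t$ is linear and injective, it sends a $k$-dimensional minimizing subspace $L$ for $\lambda_k(M)$ to a $k$-dimensional subspace of $H^1_0(M_t)$. Each Rayleigh quotient grows by at most the factor $\bigl(s_0/(s_0-t)\bigr)^2$. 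Hence
\[
\lambda_k(M)\le\lambda_k(M_t)\le\Bigl(\frac{s_0}{s_0-t}\Bigr)^2\lambda_k(M),
\]
which gives the convergence, with rate $O(t)$.

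\textbf{Main obstacle.} The one point needing care is Step 1: checking that $\Phi_t$ glues to a globally Lipschitz map preserving $H^1_0$. At $s=s_0$, $\Phi_t$ is continuous and piecewise smooth, and that is enough for composition to preserve $H^1$. The boundary $\partial M_t$ maps onto $\partial M$, so $H^1_0$ functions pull back to $H^1_0$ functions.

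If one prefers to avoid the explicit map, a fallback is the standard approach. $C_c^\infty(\operatorname{int}M)$ is dense in $H^1_0(M)$, and any finite-dimensional subspace of compactly supported smooth functions lies in $H^1_0(M_t)$ for small $t$. Continuity of the Rayleigh quotients on the unit sphere of a finite-dimensional space then yields $\limsup\lambda_k(M_t)\le\lambda_k(M)+\delta$ for every $\delta>0$.
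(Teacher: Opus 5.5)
Your proof is correct, but it takes a different route from the paper. The paper proves the proposition in one line. Domain monotonicity gives $\lambda_k(M)\le\lambda_k(M_t)$, and for the reverse inequality it refers to the argument in the Corollary: multiply the first $k$ Dirichlet eigenfunctions of $M$ by cut-off functions supported in $M_t$ and apply min-max. That approach ignores the geometry near $\partial M$. However, it leaves one step implicit. With $|\nabla\chi_t|\sim 1/t$ on a layer of width $\sim t$, one must show that the cut-off error $\int u^2|\nabla\chi_t|^2$ tends to zero. This requires the eigenfunctions to vanish at $\partial M$ in a quantitative sense, for instance via boundary regularity or a Hardy-type inequality. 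Your density fallback is the clean version of that idea and avoids the issue entirely.

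Your main argument instead uses the product collar directly. The normal dilation $\Phi_t$ transplants any test space with explicit control of both energy and $L^2$ mass. This gives the two-sided estimate $\lambda_k(M)\le\lambda_k(M_t)\le\bigl(s_0/(s_0-t)\bigr)^2\lambda_k(M)$, which is strictly more information than the paper's qualitative limit. It yields an $O(t)$ rate, and that rate is uniform over $\mathcal{M}$ once $\lambda_k(M)$ is bounded uniformly over the class, which the paper obtains from Cheng's estimate.

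The same construction works with $[t',s_0]$ mapped affinely onto $[t,s_0]$. It would therefore also make quantitative the limits $\lambda_k(M_{t+\varepsilon})\to\lambda_k(M_t)$ and $\lambda_k(M_{t+2\varepsilon-\rho})\to\lambda_k(M_t)$ used in the Corollary, with errors of order $\varepsilon$ and $\rho$ for fixed $t<s_0$. As a minor point, your restriction $t<s_0/2$ is unnecessary; the map is well defined for every $t<s_0$.
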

\subsection*{Idea of proof}
To prove Theorem \ref{thm:main}, we consider truncations of the manifold within the cylindrical radius and apply the spectral approximation techniques developed in \cite{burago}. The choice of point sampling near the boundary ensures that the Voronoi cells can be effectively described in the presence of the cylindrical boundary. Moreover, the truncations can be chosen so that the discretization and interpolation maps avoid any interaction with the boundary.

The proof  is based on the Min–Max principle.We construct a discretization map on the truncated manifold that allows us to relate the discrete Dirichlet energy on the graph to the energy functional on the manifold. By controlling the Rayleigh quotient on the image under this discretization map of the span of the first $k$ eigenfunctions of $-\Delta$, we obtain an upper bound for the $k$-th eigenvalue of the truncated graph Laplacian.

The lower bound for the $k$-th eigenvalue of the truncated graph Laplacian is obtained using an interpolation map that lifts functions from $L^2(X_t)$ to $C^{0,1}(M)$. This map acts as an approximate inverse of the discretization map. We establish estimates relating the derivative of the interpolated function to the discrete Dirichlet energy. Restricting the Rayleigh quotient to the image of the interpolation map applied to the span of the first $k$ eigenfunctions of the truncated graph Laplacian, and using these interpolation estimates, yields the lower bound.
We then use domain monotonicity and continuity of Dirichlet eigenvalues under monotone sequence of truncations to obtain spectral convergence of the graph Dirichlet eigenvalues to the Dirichlet eigenvalues of the manifold.

\section{Preliminaries}

In this section, we introduce the discrete Dirichlet energy associated with the graph discretization and show that it corresponds to the quadratic form of the truncated graph Laplacian $\Delta_t$. This connection allows the eigenvalues of $\Delta_t$ to be studied using variational characterizations analogous to those for the Dirichlet Laplace-Beltrami operator.

For an $(\varepsilon,\rho)$-proximity graph $\Gamma^t_{\varepsilon,\rho}$, let $E=E(\Gamma^t_{\varepsilon,\rho})$ denote the set of directed edges of the graph. On $L^2(E)$, we have the following inner product:
\[\langle \xi,\eta\rangle=\frac{1}{2}\sum_{e\in E}w(e)\xi(e)\eta(e).\]
\begin{definition}
    Let $u\in L^2(X)$. The discrete differential $\delta u:E\rightarrow\mathbb{R}$ is defined as 
    \[
\delta u(e_{ij}) := u(x_j)-u(x_i).
\]
\end{definition}

The corresponding Dirichlet energy is
\begin{equation}
    \label{eq:discrete_dirichlet_energy}
    \|\delta u\|^2
=
\frac{n+2}{\nu_n \rho^{n+2}}
\sum_{x_i \sim x_j}
\mu_i \mu_j \, |u(x_i)-u(x_j)|^2 .
\end{equation}

To approximate the Dirichlet eigenvalues on $M_t$, we consider functions on the graph that are supported in $X_t$ and extend them by zero outside $X_t$. Then the edges whose endpoints both lie outside $X_t$ do not contribute to the discrete Dirichlet energy. Thus the energy depends only on edges intersecting $X_t$, which naturally incorporates the Dirichlet boundary condition. 
\begin{lemma}
\label{lem:discrete-green}
For all $u,v \in L^2(X_t)$,
\[
\langle \delta u, \delta v \rangle_{L^2(E)}
=
-\langle \Delta_\Gamma u, v \rangle_{L^2(X_t)} .
\]
\end{lemma}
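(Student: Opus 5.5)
The identity is a discrete Green's formula, and I will prove it by expanding the left-hand side and symmetrizing over directed edges, using the convention that $u,v$ vanish on $X\setminus X_t$. Here $\Delta_\Gamma$ is the truncated graph Laplacian $\Delta_t$.

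\textbf{Step 1: expand the left-hand side.} By definition of the inner product on $L^2(E)$,
\[
\langle \delta u,\delta v\rangle=\frac12\sum_{e_{ij}\in E} w_{ij}\,\big(u(x_j)-u(x_i)\big)\big(v(x_j)-v(x_i)\big).
\]
The sum runs over all directed edges of $\Gamma^t_{\varepsilon,\rho}$, including edges with one or both endpoints outside $X_t$. Edges with both endpoints outside $X_t$ contribute zero because $u$ and $v$ vanish there.

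\textbf{Step 2: split the product.} Expanding the product gives four terms. Since $w_{ij}=w_{ji}$ and $E$ contains both orientations of each edge, relabeling $i\leftrightarrow j$ shows that the term $\sum w_{ij}u(x_j)v(x_j)$ equals $\sum w_{ij}u(x_i)v(x_i)$. Likewise $\sum w_{ij}u(x_i)v(x_j)$ equals $\sum w_{ij}u(x_j)v(x_i)$. Hence
\[
\langle \delta u,\delta v\rangle=\sum_{e_{ij}\in E} w_{ij}\big(u(x_i)-u(x_j)\big)v(x_i)
=-\sum_{x_i\in X} v(x_i)\sum_{x_j\sim x_i} w_{ij}\big(u(x_j)-u(x_i)\big).
\]

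\textbf{Step 3: restrict to $X_t$.} Since $v(x_i)=0$ for $x_i\notin X_t$, the outer sum reduces to $x_i\in X_t$. The inner sum still runs over all neighbours $x_j\in X$, which is exactly the sum in the definition of $\Delta_t$, with $u(x_j)=0$ for $x_j\notin X_t$. Writing the inner sum as $\mu_i\,(\Delta_t u)(x_i)$ gives
\[
\langle \delta u,\delta v\rangle=-\sum_{x_i\in X_t}\mu_i\,(\Delta_t u)(x_i)\,v(x_i)=-\langle\Delta_t u,v\rangle_{L^2(X_t)}.
\]

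The only point needing care is Step 3. The operator $\Delta_t$ sums over neighbours in all of $X$, not just in $X_t$, so the boundary-crossing edges are retained on both sides. Because the zero-extension convention is applied consistently on both sides, no boundary term appears. Taking $u=v$ also yields the nonnegativity and self-adjointness of $-\Delta_t$ asserted earlier.
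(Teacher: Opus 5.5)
Your proof is correct and follows essentially the same route as the paper: expand $\langle\delta u,\delta v\rangle$ over directed edges, use $w_{ij}=w_{ji}$ and the presence of both orientations to symmetrize, and drop vertices outside $X_t$ via the zero-extension convention. The one difference is that you work directly with the bilinear form, whereas the paper treats $u=v$ and then polarizes. That polarization step tacitly needs the symmetry of $(u,v)\mapsto\langle\Delta_\Gamma u,v\rangle$, so your direct computation is slightly cleaner and also gives self-adjointness of $-\Delta_t$ for free.
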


\begin{proof}

\begin{align*}
\langle \delta u, \delta u \rangle_{L^2(E)}
&=
\frac{1}{2}\sum_{x_i\sim x_j}
w_{ij}\bigl(u(x_j)-u(x_i)\bigr)^2 \\
&=
\frac{1}{2}\sum_{x_i\sim x_j}
w_{ij}\bigl(u(x_i)-u(x_j)\bigr)u(x_i)
-
\frac{1}{2}\sum_{x_i\sim x_j}
w_{ij}\bigl(u(x_i)-u(x_j)\bigr)u(x_j).
\end{align*}

Since the edges are directed, interchanging the indices $i$ and $j$ in the second sum and using the symmetry
$w_{ij}=w_{ji}$, we obtain
\[
\langle \delta u, \delta u \rangle_{L^2(E)}
=
\sum_{x_i}
\left(
\sum_{x_j\sim x_i}
w_{ij}\bigl(u(x_i)-u(x_j)\bigr)
\right)
u(x_i).
\]
Since $u\in L^2(X_t)$, $u(x_i)=0$, for all $x_i\notin X_t.$ Hence, 
                            \[
\langle \delta u, \delta u \rangle_{L^2(E)}
=
\sum_{x_i\in X_t}
\left(
\sum_{x_j\sim x_i}
w_{ij}\bigl(u(x_i)-u(x_j)\bigr)
\right)
u(x_i).
\]
Now for $x_i\in X_t$,
\[
\Delta_\Gamma u(x_i)
=
\frac{1}{\mu_i}
\sum_{x_j\sim x_i}
w_{ij}\bigl(u(x_j)-u(x_i)\bigr),
\]
which implies
\[
\sum_{x_j\sim x_i}
w_{ij}\bigl(u(x_i)-u(x_j)\bigr)
=
-\mu_i\,\Delta_\Gamma u(x_i).
\]

Substituting this identity yields
\[
\langle \delta u, \delta u \rangle_{L^2(E)}
=
-\sum_{x_i}
\mu_i\,\Delta_\Gamma u(x_i)\,u(x_i)
=
-\langle \Delta_\Gamma u, u \rangle_{L^2(X_t)}.
\]

Finally, both mappings
\[
(u,v)\mapsto \langle \delta u,\delta v\rangle_{L^2(E)},
\qquad
(u,v)\mapsto \langle \Delta_\Gamma u,v\rangle_{L^2(X_t)}
\]
define symmetric bilinear forms on $L^2(X_t)$.
Therefore, the general identity follows from the polarization formula,
\[
\langle \delta u,\delta v\rangle
=
\frac14\bigl(
\langle \delta(u+v),\delta(u+v)\rangle
-
\langle \delta(u-v),\delta(u-v)\rangle
\bigr).
\]

\end{proof}

\subsection{Min-Max Characterization}

The Dirichlet eigenvalues of $\Delta_t$ satisfy the variational principle
\[
\lambda_k(\Gamma^t_{\varepsilon,\rho})
=
\min_{\substack{L \subset H^1_0(X_t) \\ \dim L = k}}
\;
\max_{u \in L \setminus \{0\}}
\frac{\|\delta u\|^2}{\|u\|^2}.
\]

\section{Average Dispersion}

In this section, we define an average dispersion functional on the truncated manifold $M_t$ for a suitable scale $r$ such that for all $x\in M_t$, the balls $B_r(x)$ do not intersect the boundary of $M$. This restriction is essential for controlling energy estimates of  compactly supported functions near the boundary.

\begin{definition}\label{def:localized-energy-Mrho}

Let $f\in L^2(M_t)$ and $0<r<2\rho<t$. For any measurable set $V\subset M_t$, define
\[
E_r^t(f,V)
:=
\int_V\int_{B_r(x)}
|f(y)-f(x)|^2\,dy\,dx.
\]

\end{definition}
We write $E_r^t(f):=E_r^t(f,M_t)$. 
\begin{lemma}\label{rem:energy-bound-Mrho} For $0<r<2\rho<t$,
there exists a constant $C_n$ such that for all
$f\in L^2(M_t)$ 
\[
E_r^t(f)
\le
C_n\nu_n r^n\,\|f\|_t^2.
\]
\end{lemma}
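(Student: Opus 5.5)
We bound $E_r^t(f)$ directly by expanding the square and applying Fubini together with uniform volume bounds for small balls. The elementary inequality $|f(y)-f(x)|^2\le 2|f(y)|^2+2|f(x)|^2$ gives
\[
E_r^t(f)\le 2\int_{M_t}\int_{B_r(x)}|f(x)|^2\,dy\,dx+2\int_{M_t}\int_{B_r(x)}|f(y)|^2\,dy\,dx .
\]
Recall the convention that $f\in L^2(M_t)$ is extended by zero outside $M_t$. Hence only points $y\in M_t$ contribute to the second integral, even though $B_r(x)$ may leave $M_t$.

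\textbf{First term.} The inner integral equals $|f(x)|^2\,\mathrm{vol}(B_r(x))$. Since $r<2\rho<t<s_0$, the ball $B_r(x)$ does not meet $\partial M$ when $x\in M_t$, because $d(x,\partial M)\ge t>r$. So it is a genuine geodesic ball in the interior.

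We need a uniform bound $\mathrm{vol}(B_r(x))\le C_n\nu_n r^n$, and we would get it from Bishop--Gromov volume comparison. The hypothesis $|Ric|\le\lambda$ gives $Ric\ge-(n-1)\kappa^2$ with $\kappa$ depending on $\lambda$ and $n$. Comparison with the model space then gives $\mathrm{vol}(B_r(x))\le V_{-\kappa^2}(r)\le \nu_n r^n e^{(n-1)\kappa r}$.

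This is the one step needing care. The bound must hold with a constant independent of $x$, and that requires the ball to avoid the boundary, which is exactly why $r<t$ is imposed. Comparison applies because every point of $B_r(x)$ is joined to $x$ by a minimizing geodesic lying in the interior; the product structure near $\partial M$ is not needed for this.

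Since $r<2\rho$ and we may take $\rho<1/C_\M$, the factor $e^{(n-1)\kappa r}$ is bounded by an absolute constant. Strictly speaking this absorbs a class-dependent constant. If one insists on a purely dimensional $C_n$, restrict to $r\le 1/\kappa$, which is ensured by the smallness of $\rho$.

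\textbf{Second term.} By Fubini and the symmetry $y\in B_r(x)\iff x\in B_r(y)$,
\[
\int_{M_t}\int_{B_r(x)}|f(y)|^2\,dy\,dx=\int_{M_t}|f(y)|^2\,\mathrm{vol}\bigl(B_r(y)\cap M_t\bigr)\,dy\le \sup_{y\in M_t}\mathrm{vol}(B_r(y))\,\|f\|_t^2 .
\]
The same volume bound therefore applies to this term.

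Combining the two terms yields $E_r^t(f)\le 4\sup_{x\in M_t}\mathrm{vol}(B_r(x))\,\|f\|_t^2\le C_n\nu_n r^n\|f\|_t^2$, which is the claim.
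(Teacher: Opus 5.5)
Your proof is correct and follows the paper's route: the pointwise bound $|f(y)-f(x)|^2\le 2(|f(x)|^2+|f(y)|^2)$, the symmetry $y\in B_r(x)\iff x\in B_r(y)$ for the $|f(y)|^2$ term, and volume comparison for balls $B_r(x)$ that avoid $\partial M$ because $r<t$. Your bookkeeping is slightly more careful than the paper's, which integrates the $|f(x)|^2$ term only over $B_r(x)\cap M_t$ even though the zero-extended integrand equals $|f(x)|^2$ for $y\notin M_t$ (the final bound $4\sup_x \mathrm{vol}(B_r(x))\,\|f\|_t^2$ is unaffected), and your remark that the constant depends on the class through the lower Ricci bound unless $r\lesssim 1/\kappa$ is accurate.
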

\begin{proof}
Using the inequality
\[
|f(y)-f(x)|^2 \le 2\bigl(|f(x)|^2+|f(y)|^2\bigr),
\]
we obtain
\begin{align*}
E_r^t(f)
&\le
2\int_{M_t}\int_{B_r(x)\cap M_t}
\bigl(|f(x)|^2+|f(y)|^2\bigr)\,dy\,dx \\
&=
4\int_{M_t}
\operatorname{vol}\bigl(B_r(x)\cap M_t\bigr)
|f(x)|^2\,dx.
\end{align*}

Since $r<2\rho<t$, for $x\in M
_t$ we have $B_r(x)\cap \partial M =\phi$. By the volume comparison theorem,
\[
\operatorname{vol}\bigl(B_r(x)\cap M_t\bigr)
\le \operatorname{vol}\bigl(B_r(x))\le  C_n\nu_n r^n
\quad\text{for all }x\in M_t.
\]
Hence,
\[
E_r^t(f)\le C_n\nu_n r^n\,\|f\|_t^2.
\]
\end{proof}
We have the following lemma due to the fact that compactly supported smooth functions are dense in $H^1_0(M_t)$.
\begin{lemma}\label{lem:Er-Mrho}

For $M\in \M$, let $0< r < 2\rho<t<\frac{i_0}{2}$. For $f\in H^1_0(M_t)$,
\[
E_r^t(f)
\le
\frac{\nu_{n}r^{n+2}}{n+2}(1+c_\MMM r)
\int_{M_t} |d f|^2\,dx,
\]where $\nu_n$ denotes the volume of the unit ball in $\mathbb{R}^n$.
\end{lemma}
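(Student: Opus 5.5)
By density of $C_c^\infty(M_t)$ in $H^1_0(M_t)$, and since both sides of the inequality are continuous in the $H^1$ norm, I will first prove the estimate for $f\in C_c^\infty(M_t)$ and then pass to the limit. Continuity of the left side follows from Lemma~\ref{rem:energy-bound-Mrho} applied to differences $f_k-f$, together with $|E_r^t(f)^{1/2}-E_r^t(g)^{1/2}|\le E_r^t(f-g)^{1/2}$, which holds because $E_r^t(\cdot)^{1/2}$ is a seminorm. The point of requiring $r<2\rho<t$ is that for $x\in M_t$ the ball $B_r(x)$ lies in the interior of $M$. Since $r<i_0/2$, the exponential map $\exp_x$ is then a diffeomorphism from the Euclidean ball $B_r(0)\subset T_xM$ onto $B_r(x)$. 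The boundary therefore plays no role, and I can follow the argument for closed manifolds in \cite{burago,MaityBhattacharya2026}. Here $f$ is extended by zero to all of $M$, so $f(y)$ makes sense for $y\in B_r(x)\setminus M_t$.

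For fixed $x$ and $v\in T_xM$ with $|v|<r$, I write $\gamma_v(s)=\exp_x(sv)$ for $s\in[0,1]$. Then
\[
f(\exp_x v)-f(x)=\int_0^1 df(\dot\gamma_v(s))\,ds,
\]
and Cauchy--Schwarz gives $|f(\exp_x v)-f(x)|^2\le \int_0^1 |df(\dot\gamma_v(s))|^2\,ds$. Changing variables $y=\exp_x v$ introduces the Jacobian $J_x(v)$ of $\exp_x$. Under $|Ric|\le\lambda$ and $r<i_0/2$, I need the bound $J_x(v)\le 1+c_\MMM r$ on $B_r(0)$. The upper half of this follows from Bishop--Gromov (the Ricci lower bound), which controls the Jacobian along each radial geodesic by its model-space value $(\sinh(\sqrt{\lambda}|v|)/(\sqrt\lambda|v|))^{n-1}\le 1+c\,r^2$. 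So
\[
E_r^t(f)\le (1+c_\MMM r)\int_{M_t}\int_{|v|<r}\int_0^1 |df(\dot\gamma_v(s))|^2\,ds\,dv\,dx .
\]

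Next I exchange the order of integration and use the Liouville property of the geodesic flow. Writing $v=\tau\theta$ with $|\theta|=1$, the map $(x,\theta)\mapsto(\gamma(s),\dot\gamma(s)/|v|)$ preserves the Liouville measure on the unit sphere bundle $SM$. This works here because all geodesics of length $<r$ starting in $M_t$ stay in the interior and no boundary is hit. Since $df$ vanishes outside $M_t$, integrating over $x\in M$ rather than $M_t$ only enlarges the bound, and the triple integral becomes
\[
\int_{|v|<r}|v|^2\,dv\cdot\frac{1}{\nu_n n}\int_{SM}|df(\theta)|^2\,d\theta
=\frac{n\nu_n r^{n+2}}{n+2}\cdot\frac1n\int_M|df|^2\,dx .
\]
Here $\int_{S_x}|df(\theta)|^2\,d\theta=\frac{|S^{n-1}|}{n}|df_x|^2$, with $|S^{n-1}|=n\nu_n$. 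This yields exactly $\frac{\nu_n r^{n+2}}{n+2}(1+c_\MMM r)\int_{M_t}|df|^2$.

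The main obstacle is the Jacobian estimate using only Ricci bounds, since sectional curvature would give it directly. If the pointwise Bishop bound is not sufficient, I would instead use harmonic-coordinate $C^{1,\alpha}$ control of the metric at scale $i_0$, available under $|Ric|\le\lambda$ and $inj\ge i_0$ (Anderson). This gives $|g_{ij}-\delta_{ij}|\le C r$ on $B_r(x)$, and therefore comparison of the integrals up to the factor $1+c_\MMM r$, exactly as in \cite{MaityBhattacharya2026}. I would cite that estimate rather than reprove it.
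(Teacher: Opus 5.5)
Your proposal is correct and follows the paper's argument: the paper just invokes the closed-manifold proof of \cite{MaityBhattacharya2026} (following \cite{burago}) after noting that $B_r(x)\cap\partial M=\emptyset$ for $x\in M_t$, and you have written that proof out (density, the fundamental theorem of calculus along $\exp_x(sv)$, the Jacobian bound, Liouville invariance); your fallback to harmonic coordinates is unnecessary, since only the upper bound on $J_x$ enters and Bishop's pointwise comparison under $\mathrm{Ric}\ge-\lambda$ already gives $J_x\le 1+Cr^2$. In the Liouville step, apply invariance to the flow map on $SM_t$ (defined for times $<r<t$) and enlarge the \emph{target} domain to $SM$, using $df=0$ off $M_t$, rather than enlarging the domain of $x$ to $M$, where geodesics from points near $\partial M$ could reach the boundary.
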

\begin{proof}
 We follow the arguments in the proof of Lemma 2.3 in \cite{MaityBhattacharya2026}. Since $f$ is compactly supported in $M_t$ and $t-r>0$, the balls $B_r(x)$ do not intersect the boundary for any $x\in M_t$. Consequently, the Jacobian estimates and the local estimates along the geodesics $\exp_x(tu)$, $0\le t\le r$, used in the proof carry over to our setting.
\end{proof}
\begin{lemma}
\label{lem:poincare-Mrho}
For $0<\varepsilon<r<2\rho<t<\frac{i_0}{2}$,  let $V\subset M_{t}$ be a measurable set with $\operatorname{diam}(V)\le 2\varepsilon$ such that  $\operatorname{vol}(V)=\mu>0$. For $f\in L^2(M_t)$, let $a = \frac{1}{\mu}\int_V f(x)\,dx$ denote the average of $f$ over $V$.
Then 
\[
\int_V |f(x)-a|^2\,dx
\;\le\;
 \frac{1}{c_n(r-\varepsilon)^{n}}\,
E_r^t(f,V).
\]
\begin{proof}
    The proof follows by extending $f\in L^2(M_t)$ to the manifold by assigning it the value $0$ at all points outside $M_t.$ We then use similar arguments as in \cite{burago} as $t-r>t-2\rho>0.$ Hence, for all $x,y\in V$, $B_r(x)\cap B_r(y)$ doesn't intersect the boundary of M. The conclusion follows from the estimate $\operatorname{vol}(B_r(x))\ge C_nr^n$ for all $r<i_0$ in \cite{MR448253berger}.
\end{proof}
\end{lemma}
\section{Discretization of the manifold}
Since the boundary of the manifold is cylindrical, the Voronoi cells corresponding to discretization points near the boundary have the form $W_t \times \left[-\frac{\varepsilon}{2},\frac{\varepsilon}{2}\right]$. 
To ensure compatibility with the Dirichlet condition, we define the discretization map only for functions supported in $M_{t+\frac{\varepsilon}{2}}$. With this choice, the discretized function is assigned the value $0$ on the boundary of $X_t$.
\begin{definition}\label{def:disc-map-Mrho}
The discretization map $P \colon L^2(M_{t+\frac{\varepsilon}{2}}) \to H^1_0(X_t,\mu)$ is defined by
\[
P f(x_i)
:=
\frac{1}{\mu_i}\int_{V_i} f(x)\,dx.
\]

\end{definition}
We observe that $Pf|_{X\setminus X_{t-\varepsilon}}=0.$ For all $t>t'$, $\|Pf\|_{t'}=\|Pf\|_t.$
\begin{definition}\label{def:Pstar-Mrho}
The map $P^* \colon H^1_0(X_t) \to L^2(M_{t+\frac{\varepsilon}{2}})$ is defined by
\[
P^*u(x)
:=
\sum_{i=1}^N u(x_i)\,\mathbf 1_{V_i}(x).
\]

\end{definition}
Since the boundary is cylindrical, the support of $P^*u$ is contained in $M_{t+\frac{\varepsilon}{2}}.$ We have the following properties of $P$ and $P^*$.
\begin{lemma}\label{lem:P-properties-Mrho}
Let $P:L^2(M_{t+\frac{\varepsilon}{2}})\to L^2(X_t)$ and $P^*:H^1_0(X_t)\to L^2(M_{t+\frac{\varepsilon}{2}})$ be defined as above. Then the following hold.

\begin{enumerate}
\item For all $f\in L^2(M_{t+\frac{\varepsilon}{2}})$, $\|Pf\|_{t} \le \|f\|_{t+\frac{\varepsilon}{2}}.$

\item For all $u\in H^1_0(X_t)$, $\|P^*u\|_{t+\frac{\varepsilon}{2}} = \|u\|_{t}.$

\item For all $f\in L^2(M_{t+\frac{\varepsilon}{2}})$ and $ u\in H^1_0(X_t)$,   $\langle f,P^*u\rangle_{t+\frac{\varepsilon}{2}}
=
\langle Pf,u\rangle_{t}
 .$
\end{enumerate}
\end{lemma}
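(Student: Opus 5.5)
The plan is to read all three statements off the fact that the Voronoi cells $\{V_i\}$ partition $M$ up to measure zero, together with $\mu_i=\operatorname{vol}(V_i)$. Before computing anything, I will check that the supports match. By the remark on the product structure of the cells in the cylindrical region, every cell is a product $W_i^{t'}\times[-\tfrac{\varepsilon}{2},\tfrac{\varepsilon}{2}]$ centred on a layer $\partial M_{t'}$. So the cells whose centres lie in $X_t$ with nonzero value of $u$ are contained in $M_{t+\frac{\varepsilon}{2}}$, up to null sets. Conversely, a function supported in $M_{t+\frac{\varepsilon}{2}}$ only sees cells centred at points of $X$ lying in $M_t$. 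This justifies regarding $P^*u$ as an element of $L^2(M_{t+\frac{\varepsilon}{2}})$ and $Pf$ as a function on $X_t$ extended by zero. After that, all integrals over $M_{t+\frac{\varepsilon}{2}}$ can be written as $\sum_i\int_{V_i}$.

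For (1), apply Cauchy--Schwarz on each cell:
\[
\mu_i|Pf(x_i)|^2=\frac{1}{\mu_i}\Bigl(\int_{V_i}f\Bigr)^2\le\int_{V_i}|f|^2 .
\]
Summing over $i$ with $x_i\in X_t$, and using that the cells are essentially disjoint, gives $\|Pf\|_t^2\le\sum_i\int_{V_i}|f|^2\le\|f\|_{t+\frac{\varepsilon}{2}}^2$.

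For (2), note that $P^*u$ equals the constant $u(x_i)$ on $V_i$, and the $V_i$ are disjoint up to null sets. Hence
\[
\|P^*u\|^2=\sum_i|u(x_i)|^2\operatorname{vol}(V_i)=\sum_i\mu_i|u(x_i)|^2=\|u\|_t^2 .
\]
Here the sum effectively runs over $X_t$, by the zero-extension convention.

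For (3), compute directly:
\[
\langle f,P^*u\rangle=\sum_i u(x_i)\int_{V_i}f=\sum_i\mu_i\,u(x_i)\,Pf(x_i)=\langle Pf,u\rangle_t .
\]

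The only genuinely delicate step is the support bookkeeping in the first paragraph. One must confirm that the cells carrying nonzero $u$ lie inside $M_{t+\frac{\varepsilon}{2}}$, and that the boundary layers of $X$ only meet $M_{t+\frac{\varepsilon}{2}}$ on null sets. This is where the explicit product form of the near-boundary Voronoi cells, with layers spaced exactly $\varepsilon$ apart, is used. The remaining steps are routine.
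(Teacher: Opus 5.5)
Your argument is correct and is the standard one: cellwise Cauchy--Schwarz for (1), and direct expansion over the essentially disjoint cells $V_i$ for (2) and (3). The paper states this lemma without proof and implicitly relies on exactly this computation, though in (2) you should say explicitly that the Dirichlet condition $u|_{\partial X_t}=0$ is what removes the layer-$t$ cells $W_i^{t}\times[-\frac{\varepsilon}{2},\frac{\varepsilon}{2}]$, which reach down to distance $t-\frac{\varepsilon}{2}$, from the support of $P^*u$, so the equality uses $u\in H^1_0(X_t)$ and not merely $u\in L^2(X_t)$.
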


\begin{lemma}\label{lem:approx-PstarP-Mrho}
Let $\varepsilon < r < 2\rho<t.$  For every $f\in H^1_0(M_{t+\varepsilon/2})$,
\[
\|f-P^*Pf\|_{{t+\varepsilon/2}}^2
\;\le\;
\frac{4n\nu_{n}}{c_n }(1+c_\MMM r)\varepsilon^2\|d f\|_{t}^2.
\]

\end{lemma}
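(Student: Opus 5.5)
The plan is to bound $f-P^*Pf$ cell by cell with the Poincaré-type inequality of Lemma \ref{lem:poincare-Mrho}, and then sum the cell estimates against the global energy bound of Lemma \ref{lem:Er-Mrho}.

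\textbf{Cellwise reduction.} By definition, on each Voronoi cell $V_i$ the function $P^*Pf$ is constant and equals the average $a_i=\frac{1}{\mu_i}\int_{V_i}f$. Hence
\[
\|f-P^*Pf\|^2_{t+\varepsilon/2}=\sum_i\int_{V_i}|f-a_i|^2\,dx .
\]
Only cells meeting $M_{t+\varepsilon/2}$ contribute; on the other cells $f$ vanishes and so does $a_i$. Each $V_i\subset B_\varepsilon(x_i)$, so $\operatorname{diam}(V_i)\le 2\varepsilon$.

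\textbf{Applying the Poincaré-type inequality.} Because of the cylindrical product structure, the cells $V_i^{t'}=W_i^{t'}\times[-\varepsilon/2,\varepsilon/2]$ touching the support of $f$ lie inside $M_t$. This is exactly why $P$ is defined only on functions supported in $M_{t+\varepsilon/2}$. We may therefore apply Lemma \ref{lem:poincare-Mrho} to each such $V_i$, extending $f$ by zero outside its support, which gives
\[
\int_{V_i}|f-a_i|^2\le \frac{1}{c_n(r-\varepsilon)^n}E_r^t(f,V_i).
\]
The $V_i$ are disjoint, so summing over $i$ yields at most $\frac{1}{c_n(r-\varepsilon)^n}E_r^t(f)$.

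\textbf{Choice of $r$ and the energy bound.} Next apply Lemma \ref{lem:Er-Mrho}, valid since $f\in H^1_0(M_{t+\varepsilon/2})\subset H^1_0(M_t)$:
\[
\|f-P^*Pf\|^2\le \frac{\nu_n r^{n+2}}{(n+2)c_n(r-\varepsilon)^n}(1+c_\MMM r)\|df\|_t^2 .
\]
Now choose $r$ proportional to $\varepsilon$, for instance $r=(n+1)\varepsilon$; we need $r<2\rho$, which holds when $\varepsilon/\rho$ is small. Then
\[
\frac{r^{n+2}}{(r-\varepsilon)^n}=(n+1)^2\varepsilon^2\Bigl(1+\tfrac1n\Bigr)^n\le e(n+1)^2\varepsilon^2 .
\]
This is $\le 4n(n+2)\varepsilon^2$ up to the dimensional constant, which can be absorbed into $c_n$ since $c_n$ is unspecified. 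This gives the stated bound $\frac{4n\nu_n}{c_n}(1+c_\MMM r)\varepsilon^2\|df\|_t^2$.

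\textbf{Main obstacle.} The delicate point is verifying the hypotheses of Lemma \ref{lem:poincare-Mrho} for cells near $\partial M_{t+\varepsilon/2}$. We must ensure that each relevant cell lies in $M_t$ and that the balls $B_r(x)$, $x\in V_i$, avoid $\partial M$, so that the zero extension does not create spurious energy. This uses $r<2\rho<t$ together with the layered discretization, whose cells have width exactly $\varepsilon$ in the normal direction.
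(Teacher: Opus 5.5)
Your proposal is correct and follows the paper's proof. The paper applies Lemma \ref{lem:poincare-Mrho} cell by cell, using that the relevant cells lie in $M_t$ because of the cylindrical layering, sums to get $E_r^t(f)$, and then uses Lemma \ref{lem:Er-Mrho} with $r$ proportional to $\varepsilon$; it defers that last step to its reference, whereas you make it explicit with $r=(n+1)\varepsilon$. No absorption into $c_n$ is needed, since $e(n+1)^2\le 4n(n+2)$ holds for every $n\ge 1$, so your choice gives exactly the stated constant $\frac{4n\nu_n}{c_n}$.
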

\begin{proof}
We have
\[
\|f-P^*Pf\|_{t+\frac{\varepsilon}{2}}^2
=
\sum_{i=1}^N
\int_{V_i}
|f(x)-Pf(x_i)|^2\,dx.
\]

For all $x_i\in X_{t+\frac{\varepsilon}{2}}$, $\operatorname{diam}(V_i)\le 2\varepsilon$ and due to our choice of points in the cylindrical boundary, $V_i\subseteq  M_{t}$. Using Lemma~\ref{lem:poincare-Mrho} we have
\[
\int_{V_i}
|f(x)-Pf(x_i)|^2\,dx
\le
\frac{1}{c_n(r-\varepsilon)^n}\,
E_r(f,V_i).
\]

Summing over $i$,
\[
\|f-P^*Pf\|_{{t+\varepsilon/2}}^2
\le
\frac{1}{c_n(r-\varepsilon)^n}
E_r^{t}(f).
\]
The rest of the arguments follow from the proof of Lemma 3.2 in \cite{MaityBhattacharya2026}.
\end{proof}

\begin{lemma}\label{lem:Pf-estimates-Mrho}
Let $t>\rho+3\varepsilon.$ For every $f\in H^1_0(M_{t+\varepsilon/2})$ the following hold:
\begin{enumerate}
\item $
 |
\|Pf\|_{t} - \|f\|_{t+\varepsilon/2}|^2
\le
\frac{4n\nu_{n}}{c_n } (1+c_\MMM r)\varepsilon^2\,\|df\|_{t+\varepsilon/2}^2. $

\item $
\|\delta(Pf)\|^2
\le
(1+c_\MMM (\rho+2\varepsilon))\left(1+\frac{2\varepsilon}{\rho}\right)^{n+2}\,\|d f\|_{t+\varepsilon/2}$.

\end{enumerate}
\end{lemma}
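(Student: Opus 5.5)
For part (1), I will reduce everything to Lemma~\ref{lem:approx-PstarP-Mrho} using the isometry property of $P^*$. By Lemma~\ref{lem:P-properties-Mrho}(2), $\|Pf\|_t=\|P^*Pf\|_{t+\varepsilon/2}$. The reverse triangle inequality in $L^2(M_{t+\varepsilon/2})$ then gives
\[
\bigl|\|Pf\|_t-\|f\|_{t+\varepsilon/2}\bigr|\le \|f-P^*Pf\|_{t+\varepsilon/2}.
\]
Squaring and applying Lemma~\ref{lem:approx-PstarP-Mrho} yields the stated bound. The only point to check is that the gradient norm on the right can be taken over $M_{t+\varepsilon/2}$. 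This holds because $f\in H^1_0(M_{t+\varepsilon/2})$ vanishes outside that set, so $\|df\|_t=\|df\|_{t+\varepsilon/2}$.

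For part (2), I will follow the argument in \cite{burago} and \cite{MaityBhattacharya2026}.

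\emph{Pointwise bound.} Write $Pf(x_j)-Pf(x_i)$ as an average over $V_i\times V_j$ of $f(y)-f(x)$. Cauchy--Schwarz gives
\[
\mu_i\mu_j|Pf(x_i)-Pf(x_j)|^2\le \int_{V_i}\int_{V_j}|f(y)-f(x)|^2\,dy\,dx .
\]

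\emph{Summing over edges.} If $x_i\sim x_j$, i.e.\ $d(x_i,x_j)<\rho$, then every $x\in V_i$ and $y\in V_j$ satisfy $d(x,y)<\rho+2\varepsilon$, since $V_i\subset B_\varepsilon(x_i)$. Summing over edges therefore gives
\[
\|\delta(Pf)\|^2\le \frac{n+2}{\nu_n\rho^{n+2}}\int_{M}\int_{B_{\rho+2\varepsilon}(x)}|f(y)-f(x)|^2\,dy\,dx .
\]

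\emph{Restricting the domain.} Only pairs with $f(x)\ne f(y)$ contribute, so at least one of $x,y$ lies in $\operatorname{supp} f\subset M_{t+\varepsilon/2}$. Using the symmetry of the integrand in $(x,y)$, I will reorganize the integral so that the outer variable ranges over $M_t$. A pair with $x\notin M_t$ has $y\in M_{t+\varepsilon/2}$, which is within distance $\rho+2\varepsilon$ of $x$, and the roles of $x$ and $y$ can be swapped. The quantity is then $E^t_{\rho+2\varepsilon}(f)$, up to this symmetrization.

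\emph{Applying the energy estimate.} The hypothesis $t>\rho+3\varepsilon$ ensures that every ball $B_{\rho+2\varepsilon}(x)$ used here avoids $\partial M$. This permits Lemma~\ref{lem:Er-Mrho} with $r=\rho+2\varepsilon$, once the needed ordering $r<t<i_0/2$ is verified from the standing assumptions. This gives
\[
\|\delta(Pf)\|^2\le \frac{n+2}{\nu_n\rho^{n+2}}\cdot\frac{\nu_n(\rho+2\varepsilon)^{n+2}}{n+2}\bigl(1+c_\MMM(\rho+2\varepsilon)\bigr)\|df\|^2_{t+\varepsilon/2},
\]
which is the claim, with the norm squared.

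I expect the main obstacle to be the symmetrization step. The edge sum counts cells $V_i$ with $x_i$ anywhere in $X$, including boundary layers $\partial X_{t-k\varepsilon}$ whose cells lie outside $M_t$. I must make sure these are covered by $E^t_{\rho+2\varepsilon}$ without double counting beyond a harmless constant. My first attempt will use the product structure of the cylindrical region. Cells with $x_i\notin X_t$ have $Pf(x_i)=0$ by the construction (the $\partial X_{t+\varepsilon}$ layer and the support of $f$ in $M_{t+\varepsilon/2}$). Each such edge then contributes $\mu_i\mu_j|Pf(x_j)|^2$, and I will bound these terms by integrating over the $x$-side within $M_t$ and the $y$-side within $M_{t+\varepsilon/2}$. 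Alternatively, I can extend $f$ by zero and apply Lemma~\ref{lem:Er-Mrho} on a slightly larger truncation $M_{t'}$ with $t'=t-\varepsilon$, which still exceeds $\rho+2\varepsilon$ by the hypothesis. This avoids the symmetrization entirely.
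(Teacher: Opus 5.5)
Your route is the paper's: part (1) is its argument, and part (2) is the same chain of Cauchy--Schwarz, summing over edges, bounding by $E_{\rho+2\varepsilon}$, and applying Lemma~\ref{lem:Er-Mrho}. The gap is the step you flagged yourself, and neither of your two fixes closes it.

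The edge sum runs over directed edges of all of $X$. This includes edges $(i,j)$ with $x_i\notin X_t$, so that $Pf(x_i)=0$, but $Pf(x_j)\neq 0$. For example, take $p\in\partial X_0$, $x_j=\exp_p((t+\varepsilon)\nu_p)$ and $x_i=\exp_p((t-k\varepsilon)\nu_p)$ with $k\ge 1$ and $(k+1)\varepsilon<\rho$. These are adjacent, and their Cauchy--Schwarz term $\int_{V_i}\int_{V_j}|f(y)-f(x)|^2\,dy\,dx=\mu_i\int_{V_j}|f|^2$ is generally nonzero. The cell $V_i$ reaches down to distance $t-k\varepsilon-\tfrac{\varepsilon}{2}$ from $\partial M$, which can be nearly as small as $t+\tfrac{\varepsilon}{2}-\rho$. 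So the outer variable must range over roughly $M_{t+\varepsilon/2-\rho}$. Two consequences follow:
\begin{enumerate}
\item $M_{t-\varepsilon}$ misses such cells as soon as $\rho>2\varepsilon$.
\item The hypothesis $t>\rho+3\varepsilon$ alone does not keep the balls $B_{\rho+2\varepsilon}(x)$ centred in these cells away from $\partial M$.
\end{enumerate}
Symmetrization fails for a different reason. Both orientations of every edge are already in the sum, so swapping $x$ and $y$ turns the terms with $x\notin M_t$ into a second copy of terms with $x\in M_t$, $y\notin M_t$. But $E^t_{\rho+2\varepsilon}(f)$ counts those terms only once. You therefore lose a factor up to $2$ on them. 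That factor is not harmless, because a leading constant $1+O(\varepsilon/\rho+\rho)$ is exactly what (2) asserts.

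The fix is to enlarge the outer domain rather than symmetrize. Suppose $x\in V_i$, $y\in V_j$, $x_i\sim x_j$ and $f(x)\neq f(y)$. Then $d(x,y)<\rho+2\varepsilon$, and one of $x,y$ lies in $M_{t+\varepsilon/2}$. Hence $x\in M_{t'}$ with $t'=t-\rho-\tfrac32\varepsilon$, and
\[
\|\delta(Pf)\|^2\le\frac{n+2}{\nu_n\rho^{n+2}}\,E^{t'}_{\rho+2\varepsilon}(f)
\]
holds with constant one. Since $f\in H^1_0(M_{t'})$, Lemma~\ref{lem:Er-Mrho} applied on $M_{t'}$ gives (2). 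Its proof only needs the balls $B_{\rho+2\varepsilon}(x)$, $x\in M_{t'}$, to miss $\partial M$, i.e.\ $t>2\rho+\tfrac72\varepsilon$. This is not implied by $t>\rho+3\varepsilon$. It does follow from the standing condition $3\rho<t+\varepsilon$ in the definition of the proximity graph once $\varepsilon/\rho<\tfrac29$.

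The paper's own proof has the same hole. It restricts the outer integral to $M_t$ in one line, on the grounds that cells carrying nonzero values of $Pf$ lie in $M_{t+\varepsilon/2}$, and this overlooks exactly these edges.
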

\begin{proof}
Since $f\in H^1_0(M_{t+\frac{\varepsilon}{2}})$ and $P^*$ is an isometry,
\[
\|Pf\|_{t} =\|Pf\|_{t+\frac{\varepsilon}{2}}= \|P^*Pf\|_{{t+\frac{\varepsilon}{2}}}.
\]
Therefore, 
\[
\big|\|Pf\|_{t+\frac{\varepsilon}{2}} - \|f\|_{t+\frac{\varepsilon}{2}}\big|\le \|P^*Pf-f\|_{t+\frac{\varepsilon}{2}}
\]
which proves $(1)$ using Lemma \ref{lem:approx-PstarP-Mrho}.

Applying Cauchy--Schwarz inequality,
\[
\big|Pf(x_j)-Pf(x_i)\big|^2
\le
\frac{1}{\mu_i\mu_j}
\int_{V_i}\int_{V_j}
|f(y)-f(x)|^2\,dy\,dx.
\]

Hence,
\[
\|\delta(Pf)\|^2
\le
\frac{n+2}{\nu_n\rho^{n+2}}
\sum_{x_i\sim x_j}
\int_{V_i}\int_{V_j}
|f(y)-f(x)|^2\,dy\,dx.
\]

For $x\in V_i$,  let $U(x):=\bigcup_{x_j\sim x_i} V_j
\subseteq B_{\rho+2\varepsilon}(x).$
For all $Pf\in H^1_0(X_{t})$ the voronoi cells $V_i$ corresponding to non-zero entries lie in $M_{t+\frac{\varepsilon}{2}}.$ Since $t>2\rho>\rho+2\varepsilon$, $B_{\rho+2\varepsilon}(x)\cap\partial M=\phi.$ \\Hence,
\begin{align*}
   \|\delta(Pf)\|^2
&\le
\frac{n+2}{\nu_n\rho^{n+2}}
\int_{M_{t}}
\int_{B_{\rho+2\varepsilon}(x)}
|f(y)-f(x)|^2\,dy\,dx \\
&=
\frac{n+2}{\nu_n\rho^{n+2}}\,E^{t}_{\rho+2\varepsilon}(f). 
\end{align*}

Applying Lemma~\ref{lem:Er-Mrho}, since $t>\rho+2\varepsilon$,
\[
E^t_{\rho+2\varepsilon}(f)
\le
\frac{\nu_n}{n+2}
(\rho+2\varepsilon)^{n+2}
(1+c_\M (\rho+2\varepsilon))\,\|d f\|_{{t}}^2.
\]

Therefore,
\[
\|\delta(Pf)\|^2
\le
\left(\frac{\rho+2\varepsilon}{\rho}\right)^{n+2}
(1+c_\M (\rho+2\varepsilon))\,\|d f\|_t^2.
\]

\end{proof}
\begin{theorem}\label{prop:upper-bound-Mrho}
Let $0<\varepsilon<2\rho< t<s_0<\frac{i_0}{2}$, $M\in \M$ and $\Gamma^t_{\varepsilon,\rho}$ be an $(\varepsilon,\rho)$-proximity graph on $M$. 
Let $\lambda_k(M_{t+\varepsilon})$ and $\lk(\Gamma^t_{\varepsilon,\rho})$ denote the $k$--th Dirichlet eigenvalues of $-\Delta$ on $M_{t+\varepsilon}$ and $\Delta_t$ respectively.  There exists positive constants $C_n$ and $C_{\M,k}$ such that for any $\rho,\frac{\varepsilon}{\rho}<\frac{1}{C_n}$,
\[
\lambda_k(\Gamma^t_{\varepsilon,\rho})
\;\le\;
\left(1+C_{\M,k}\left(\frac{\varepsilon}{\rho}+\rho\right)\right)\lk(M_{t+\varepsilon}).
\]

\end{theorem}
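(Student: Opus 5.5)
We use the min--max principle for $-\Delta_t$ with the test space $P(L)$, where $L$ is spanned by the first $k$ Dirichlet eigenfunctions of $M_{t+\varepsilon}$. Let $f_1,\dots,f_k$ be $L^2$-orthonormal Dirichlet eigenfunctions of $-\Delta$ on $M_{t+\varepsilon}$ with eigenvalues $\lambda_1(M_{t+\varepsilon})\le\dots\le\lambda_k(M_{t+\varepsilon})$, extended by zero. Then $L=\operatorname{span}\{f_1,\dots,f_k\}\subset H^1_0(M_{t+\varepsilon})\subset H^1_0(M_{t+\varepsilon/2})$, so $P$ is defined on $L$. Every $f\in L$ satisfies $\|df\|^2\le \lambda_k(M_{t+\varepsilon})\|f\|^2$, where all norms coincide with the norm on $M_{t+\varepsilon/2}$ because of the support.

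\textbf{Step 1: $P|_L$ is injective, so $\dim P(L)=k$.} Lemma \ref{lem:Pf-estimates-Mrho}(1), applied with some admissible $r$ between $\varepsilon$ and $2\rho$, gives
\[
\|Pf\|_t\ge \|f\|-C_n\varepsilon\|df\|\ge\Bigl(1-C_n\varepsilon\sqrt{\lambda_k(M_{t+\varepsilon})}\Bigr)\|f\|.
\]
This is positive as soon as $\varepsilon$ is small compared with $\lambda_k^{-1/2}$. Since $\lambda_k(M_{t+\varepsilon})$ is bounded in terms of $\mathcal M$ and $k$, the dependence on $k$ can be absorbed into the constant: $\varepsilon<\rho(\varepsilon/\rho)$ is small whenever $\rho,\varepsilon/\rho<1/C_n$, after adjusting constants. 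The uniform bound on $\lambda_k$ follows from domain monotonicity together with a Cheng-type upper bound on a ball of fixed radius, say of size comparable to $i_0$, inside $M_{s_0}$. Under this smallness condition $Pf\ne 0$ for every nonzero $f\in L$.

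\textbf{Step 2: Rayleigh quotient bound.} For $f\in L$, Lemma \ref{lem:Pf-estimates-Mrho}(2) should be read with $\|df\|^2$ on the right-hand side. It gives
\[
\|\delta(Pf)\|^2\le (1+c_\M(\rho+2\varepsilon))\Bigl(1+\tfrac{2\varepsilon}{\rho}\Bigr)^{n+2}\lambda_k(M_{t+\varepsilon})\|f\|^2.
\]
Dividing by the bound from Step 1 yields
\[
\frac{\|\delta Pf\|^2}{\|Pf\|^2}\le \frac{(1+c_\M(\rho+2\varepsilon))(1+2\varepsilon/\rho)^{n+2}}{(1-C\varepsilon\sqrt{\lambda_k})^2}\,\lambda_k(M_{t+\varepsilon}).
\]
Expand with $\varepsilon/\rho<1/C_n$: the factor $(1+2\varepsilon/\rho)^{n+2}$ is at most $1+C_n\varepsilon/\rho$, and $\varepsilon\le\rho$. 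The denominator is at least $1-C_{\M,k}\,\varepsilon$, and $\varepsilon\le \rho\cdot(\varepsilon/\rho)\le\rho$. The whole prefactor is therefore at most $1+C_{\M,k}(\varepsilon/\rho+\rho)$. The min--max characterization of $\lambda_k(\Gamma^t_{\varepsilon,\rho})$ with the $k$-dimensional space $P(L)\subset H^1_0(X_t)$ then gives the claim.

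\textbf{Main obstacle.} The delicate point is the hypothesis bookkeeping. Lemma \ref{lem:Pf-estimates-Mrho} requires $t>\rho+3\varepsilon$ and balls of radius $\rho+2\varepsilon$ avoiding $\partial M$, whereas the theorem assumes only $2\rho<t$. Since $\varepsilon/\rho$ is small, $\rho+3\varepsilon<2\rho<t$, so this reduces to the smallness assumption. The discretization must also see only cells inside $M_t$; this is guaranteed because $\operatorname{supp} f\subset M_{t+\varepsilon}$ and, thanks to the cylindrical product structure, every Voronoi cell meeting that support lies in $M_{t+\varepsilon/2}$. A second point needing care is the uniform $\mathcal M$-dependent bound on $\lambda_k(M_{t+\varepsilon})$ used in Step 1, which is what makes $C_{\M,k}$ depend on $k$.
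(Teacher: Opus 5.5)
Your proposal is essentially the paper's proof. Both use min--max with the test space $P(W)$, get injectivity of $P$ from Lemma~\ref{lem:approx-PstarP-Mrho} / Lemma~\ref{lem:Pf-estimates-Mrho}(1) together with the Cheng-type uniform bound on $\lambda_k(M_{t+\varepsilon})$, and bound the Rayleigh quotient with Lemma~\ref{lem:Pf-estimates-Mrho}(2), which you correctly read with $\|df\|^2$ on the right. The only difference is that you take the first $k$ eigenfunctions of $M_{t+\varepsilon}$ directly, whereas the paper takes them on $M_{t+\varepsilon/2}$ and uses domain monotonicity to reach the same inequality $\|df\|^2\le\lambda_k(M_{t+\varepsilon})\|f\|^2$.
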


\begin{proof}
For a subspace $L\subset H^1_0(X_{t})$ with $\dim L=k$, if
\[
\sup_{0\neq u\in L}
\frac{\|\delta u\|^2}{\|u\|_{t}^2}
\le
\left(1+C_{\M,k}\left(\frac{\varepsilon}{\rho}+\rho\right)\right)\lk(M_{t+\varepsilon})
\]
then by the min-max principle for $\Delta_t$, the conclusion follows.\\
Let $W\subset H_0^1(M_{t+\varepsilon/2})$ be the $k$-dimensional subspace spanned by the first $k$ orthonormal Dirichlet eigenfunctions of $-\Delta$ on $M_{t+\varepsilon/2}$.
Then for all $f\in W$, since $M_{t+\varepsilon}\subseteq M_{t+\varepsilon/2}$, applying domain monotonicity on eigenvalues
\[
\|d f\|_{t+\varepsilon/2}^2
\le
\lambda_k(M_{t+\varepsilon})\,\|f\|_{t+\varepsilon/2}^2.
\]
Let $L := P(W)\subset H^1_0(X_t).$   
Using Lemma~\ref{lem:approx-PstarP-Mrho},  $P$ is injective on $W$ for $\rho<\frac{1}{C_{\M,k}}$, where the constant is obtained from \cite{cheng} which asserts the existence of an upper bound of $\lk(M_{t+\varepsilon})$  depending only on $\M$ and $k$. 

Let $u\in L\setminus\{0\}$ and choose $f\in W$ such that $u=Pf$. 
We use Lemma \ref{lem:Pf-estimates-Mrho} to estimate the Rayleigh quotient of $u$, by following the arguments of the proof of Theorem 3.5 in \cite{MaityBhattacharya2026} which carry over to the Dirichlet setting.

\end{proof}

\section{Smoothing operator}

In this section, we present a smoothing operator that maps $L^2$ functions on $M_{t+\varepsilon/2}$ to $C^{0,1}$ functions on $M_{t+\varepsilon/2-r}$. The operator spreads the function locally over balls of radius $r$ in $M$ using a kernel, while avoiding any intersection with the boundary.
\begin{definition}
  Let  $\psi\colon [0,\infty)\to\mathbb{R}$ by
\[
\psi(t)
:=
\begin{cases}
\displaystyle
\frac{n+2}{2\nu_n}\,(1-t^2), & 0\le t\le 1,\\[0.3em]
0, & t>1,
\end{cases}
\]
where $\nu_n$ denotes the volume of the unit ball in $\mathbb{R}^n$.  
\end{definition}

The normalization constant is chosen so that
\[
\int_{\mathbb{R}^n}\psi(|x|)\,dx = 1.
\]
\begin{definition}
  Fix $r$ such that $0<r<2\rho<t$.  For $x\in M$, such that $d(x,\partial M)>r$,  we define the kernel as
\[
k_r(x,y)
:=
r^{-n}\,\psi\!\left(\frac{d(x,y)}{r}\right).
\]
 
\end{definition}

Due to the choice of $r$, the support of $k_r$ doesn't intersect the boundary of $M$. Then, \[\mathrm{grad}k_r(.,y)(x)=\frac{n+2}{\nu_nr^{n+2}}\exp^{-1}(y), \qquad \forall x\in M_t \text{ and } y\in B_r(x).\]

\begin{lemma}
\label{eq:kernel_symmetric}
The kernel map has the following properties.
    \begin{enumerate}
        \item For $x\in M_r$, $    \big| k_r(x,y)\big|\le\frac{n+2}{\nu_nr^n}.$
\item For $d(x,\partial M)>2r$, $k_r(x,y)=k_r(y,x).$

    \end{enumerate}
\end{lemma}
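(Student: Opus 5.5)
Both properties follow directly from the definition $k_r(x,y)=r^{-n}\psi(d(x,y)/r)$, so the plan is a short verification. The only place the hypotheses on $x$ enter is to make sure $k_r$ is defined at the relevant points and that no geodesic ball meets $\partial M$.

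For (1), I would take $x\in M_r$, so that $d(x,\partial M)\ge r$ and $k_r(x,\cdot)$ is defined. The bound is simply the supremum of $\psi$. On $[0,1]$ the function $\psi(s)=\frac{n+2}{2\nu_n}(1-s^2)$ is decreasing and nonnegative, with maximum $\psi(0)=\frac{n+2}{2\nu_n}$. For $s>1$ it vanishes. Hence $0\le \psi(d(x,y)/r)\le \frac{n+2}{2\nu_n}$ for every $y$, and therefore
\[
|k_r(x,y)|\le \frac{n+2}{2\nu_n r^n}\le \frac{n+2}{\nu_n r^n}.
\]
This is in fact stronger than the stated bound by a factor of $2$. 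It holds regardless of where $y$ lies: for $d(x,y)\ge r$ the kernel is simply $0$.

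For (2), the key point is that the formula for $k_r$ depends on the pair $(x,y)$ only through $d(x,y)$, and $d(x,y)=d(y,x)$. The only issue is making sure $k_r(y,x)$ is defined, i.e.\ that $d(y,\partial M)>r$ whenever the value could be nonzero. I would split into two cases.

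\textbf{Case $d(x,y)<r$.} The triangle inequality gives $d(y,\partial M)\ge d(x,\partial M)-d(x,y)>2r-r=r$. So $y$ lies in the admissible region, both $k_r(x,y)$ and $k_r(y,x)$ are given by the same formula, and they equal $r^{-n}\psi(d(x,y)/r)=r^{-n}\psi(d(y,x)/r)$.

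\textbf{Case $d(x,y)\ge r$.} Here $k_r(x,y)=0$. For $k_r(y,x)$, either $y$ is admissible and the same formula gives $0$, or $y$ is not admissible. In the latter case we adopt the natural convention of extending $k_r$ by $0$ off its support, which is consistent with the kernel having support in $B_r$. Either way $k_r(y,x)=0=k_r(x,y)$.

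There is one subtlety. The paper measures $d$ as the Riemannian distance on $M$, and one should check that the minimizing path realizing $d(x,y)<r$ stays away from $\partial M$, so that the distance is the same one used in defining $k_r(y,\cdot)$. This is automatic: any point $z$ on such a path has $d(x,z)<r$, so $d(z,\partial M)>r$. The path therefore lies in the interior, and no boundary effects arise. This gives the symmetry, which is the only step needing more than a one-line check.
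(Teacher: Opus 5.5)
Your verification is correct. The paper states this lemma without proof, treating both items as immediate from $k_r(x,y)=r^{-n}\psi(d(x,y)/r)$, and that is exactly the direct check you carry out; your sharper bound $\frac{n+2}{2\nu_n r^n}$ also holds, since $\max\psi=\psi(0)=\frac{n+2}{2\nu_n}$. Your closing ``subtlety'' paragraph is unnecessary, because $d$ is the single intrinsic metric on $M$ and hence symmetric. The extension-by-zero convention is likewise automatic: if $d(y,\partial M)\le r$, then $d(x,y)\ge d(x,\partial M)-d(y,\partial M)>r$, so evaluating the same formula at such $y$ already gives $k_r(y,x)=0$.
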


\begin{definition}\label{def:smoothing-Mrho}
For $0<r<2\rho<t$ the integral operator $\Lambda_r^0 \colon L^2(M_{t+\varepsilon/2})\to C^{0,1}(M_{t+\varepsilon/2-r})$ is defined
by
\[
\Lambda^0_r f(x)
:=
\int_{M_{t+\varepsilon/2}} k_r(x,y)\,f(y)\,dy.
\]
\end{definition}
\begin{definition}
   For $x\in M$ such that $d(x,\partial M)>r$, define $\theta_r(x)=\int_{B_r(x)}k_r(x,y)dy$ i.e.,
\[
\theta_r(x)=r^{-n}\int_{B_r(x)} \psi\!\left(\frac{d(x,y)}{r}\right)\,dy.
\]
 
\end{definition}

\begin{definition}\label{def:normalized-smoothing}
Let $0<r<2\rho<t$. The normalized smoothing operator $
\Lambda_r \colon L^2(M_{t+\varepsilon/2})\to C^{0,1}(M_{t+\varepsilon/2-r})
$ is defined as 
\[
\Lambda_r f(x)
:=
\frac{1}{\theta_r(x)}\,\Lambda^0_r f(x).
\]
\end{definition}
\begin{remark}
The restriction that $r<2\rho<t$ ensures that the smoothing kernel is supported away from the boundary so that all Jacobian and gradient estimates hold exactly as in the case witohut boundary. We state Lemma 4.2 and Lemma 4.3 from \cite{MaityBhattacharya2026}.
\end{remark}
\begin{lemma}\label{lem:theta-Mrho}
Let $0<r<2\rho<t<\frac{i_0}{2}$. For all $x\in M_t$,
\begin{enumerate}
    \item $1-c_\M r \le \theta_r(x) \le 1+ c_\M r,$ and
    \item $|\nabla \theta_r(x)| \le c_\M .$
\end{enumerate}
\end{lemma}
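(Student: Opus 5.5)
The plan is to reduce both estimates to a Jacobian estimate for the exponential map in normal coordinates centred at $x$. This is exactly the mechanism behind Lemmas 4.2 and 4.3 of \cite{MaityBhattacharya2026}, and it applies verbatim here because of the position of $B_r(x)$. Fix $x\in M_t$. Since $r<2\rho<t$, the ball $B_r(x)$ does not meet $\partial M$. Since $r<t<\frac{i_0}{2}$, the map $\exp_x$ is a diffeomorphism from the Euclidean ball $\{v\in T_xM: |v|<r\}$ onto $B_r(x)$. Let $J_x(v)$ be the Jacobian of $\exp_x$ at $v$. Then
\[
\theta_r(x)=r^{-n}\int_{|v|<r}\psi\!\left(\frac{|v|}{r}\right)J_x(v)\,dv .
\]
The key input I will take from \cite{MaityBhattacharya2026} is the uniform Jacobian estimate $|J_x(v)-1|\le c_\M |v|$ for $|v|<r$, valid for all $M\in\M$. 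That paper derives it from $|Ric|\le\lambda$ and $inj\ge i_0$, and it replaces the sectional-curvature comparison used in \cite{burago}.

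\textbf{Part (1).} The normalization $\int_{\mathbb{R}^n}\psi(|v|)\,dv=1$ gives $r^{-n}\int_{|v|<r}\psi(|v|/r)\,dv=1$. Hence
\[
|\theta_r(x)-1|\le r^{-n}\int_{|v|<r}\psi\!\left(\frac{|v|}{r}\right)|J_x(v)-1|\,dv\le c_\M r,
\]
using $\psi\ge0$ and $|v|<r$. This is (1).

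\textbf{Part (2).} I will differentiate $\theta_r(x)=\int_M k_r(x,y)\,dy$ under the integral sign. This is legitimate because $\psi$ is Lipschitz with $\psi(1)=0$, so the moving boundary of $B_r(x)$ contributes nothing. We have $\psi'(s)=-\frac{n+2}{\nu_n}s$, and for $y\in B_r(x)$ the gradient of $d(\cdot,y)$ is $-\exp_x^{-1}(y)/d(x,y)$. Therefore
\[
\nabla_x k_r(x,y)=\frac{n+2}{\nu_n r^{n+2}}\exp_x^{-1}(y),
\]
which matches the formula displayed before Lemma \ref{eq:kernel_symmetric}. Passing to normal coordinates,
\[
\nabla\theta_r(x)=\frac{n+2}{\nu_n r^{n+2}}\int_{|v|<r} v\,J_x(v)\,dv=\frac{n+2}{\nu_n r^{n+2}}\int_{|v|<r} v\,\bigl(J_x(v)-1\bigr)\,dv ,
\]
where the second equality uses the odd symmetry $\int_{|v|<r}v\,dv=0$. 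The Jacobian estimate then bounds the integrand by $r\cdot c_\M r$ on a set of volume $\nu_n r^n$. This gives $|\nabla\theta_r(x)|\le (n+2)c_\M$, which is (2) after renaming the constant.

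\textbf{Main obstacle.} The only nontrivial step is the uniform first-order Jacobian bound $|J_x(v)-1|\le c_\M|v|$ under two-sided Ricci bounds alone. A pointwise expansion $J=1-\tfrac16 Ric(v,v)+\dots$ does not give a uniform remainder. One needs the harmonic-coordinate, $C^{1,\alpha}$ metric control available under $|Ric|\le\lambda$ and $inj\ge i_0$, which is what \cite{MaityBhattacharya2026} supplies. I would invoke that result rather than reprove it. The only thing to check is that every point of $B_r(x)$ lies in the interior of $M$, so the closed-manifold argument applies; this is guaranteed by $x\in M_t$ and $r<t$.
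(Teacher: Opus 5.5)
Your proposal is correct and takes essentially the same route as the paper. The paper simply quotes Lemmas 4.2 and 4.3 of \cite{MaityBhattacharya2026}, justified by the fact that $r<2\rho<t$ keeps $B_r(x)$ away from $\partial M$ for $x\in M_t$, so the closed-manifold estimates apply verbatim. You go one step further and derive both bounds from the first-order Jacobian estimate $|J_x(v)-1|\le c_\M|v|$, using the normalization of $\psi$ and the odd symmetry $\int_{|v|<r}v\,dv=0$; this is the mechanism behind the cited lemmas, and your derivation is sound.
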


\begin{lemma}\label{lem:L2-stability-Mrho}
Let $0<3r<t<\frac{i_0}{2}$. For every $f\in L^2(M_{t+\varepsilon/2})$ and $r<\frac{1}{c_\M}$,
\[
\|\Lambda_r f\|^2_{t+\frac{\varepsilon}{2}-r}
\le
\left( \frac{1+c_\MMM r}{1-c_\MMM r}\right)\,\|f\|^2_{t+\varepsilon/2}.
\]
\end{lemma}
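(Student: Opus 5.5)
The proof of Lemma~\ref{lem:L2-stability-Mrho} has two ingredients: the lower bound $\theta_r\ge 1-c_\MMM r$ of Lemma~\ref{lem:theta-Mrho}, which controls the normalization, and a Schur-type (Cauchy--Schwarz) estimate for the unnormalized operator $\Lambda_r^0$. Because $3r<t$, every point $x$ in the domain of $\Lambda_r f$ satisfies $d(x,\partial M)\ge t+\frac{\varepsilon}{2}-r>2r$. Hence the balls $B_r(x)$ miss $\partial M$, the kernel is symmetric by Lemma~\ref{eq:kernel_symmetric}(2), and Lemma~\ref{lem:theta-Mrho} applies. The last point needs one check: that lemma is stated on $M_t$, while here $x\in M_{t+\varepsilon/2-r}$. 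Its proof only uses that $B_r(x)$ avoids the boundary and $r<i_0/2$, so it extends to these points. I will note this explicitly.

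The key steps are as follows.

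\textbf{Step 1 (pointwise bound).} Fix $x$. Since $k_r(x,\cdot)\ge 0$, Cauchy--Schwarz with weight $k_r(x,\cdot)$ gives
\[
|\Lambda^0_r f(x)|^2\le \Big(\int k_r(x,y)\,dy\Big)\Big(\int_{M_{t+\varepsilon/2}} k_r(x,y)|f(y)|^2\,dy\Big)\le \theta_r(x)\int_{M_{t+\varepsilon/2}} k_r(x,y)|f(y)|^2\,dy .
\]
Dividing by $\theta_r(x)^2$ yields
\[
|\Lambda_r f(x)|^2\le \frac{1}{\theta_r(x)}\int_{M_{t+\varepsilon/2}} k_r(x,y)|f(y)|^2\,dy\le \frac{1}{1-c_\MMM r}\int_{M_{t+\varepsilon/2}} k_r(x,y)|f(y)|^2\,dy.
\]
This requires $r<1/c_\MMM$, which is exactly the hypothesis.

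\textbf{Step 2 (integrate and swap).} Integrate the pointwise bound over $x\in M_{t+\varepsilon/2-r}$ and apply Fubini:
\[
\|\Lambda_r f\|^2_{t+\frac{\varepsilon}{2}-r}\le \frac{1}{1-c_\MMM r}\int_{M_{t+\varepsilon/2}}|f(y)|^2\Big(\int_{M_{t+\varepsilon/2-r}}k_r(x,y)\,dx\Big)dy .
\]

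\textbf{Step 3 (inner integral).} By symmetry of the kernel, $k_r(x,y)=k_r(y,x)$, so the inner integral is at most $\int_{B_r(y)}k_r(y,x)\,dx=\theta_r(y)\le 1+c_\MMM r$. Since $y\in M_{t+\varepsilon/2}$, this bound again comes from Lemma~\ref{lem:theta-Mrho}. Substituting gives the claimed factor $\frac{1+c_\MMM r}{1-c_\MMM r}$.

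The main obstacle is justifying the symmetry in Step 3. A priori the kernel is only defined for the first argument at distance greater than $r$ from $\partial M$. To use it with $y$ as the first argument, I need $y\in M_{t+\varepsilon/2}$, so that $d(y,\partial M)>2r$, and $x\in B_r(y)$. Both hold since $t>3r$, and both points are then within the injectivity radius, so $d(x,y)=d(y,x)$ gives symmetry directly. Everything else is routine.
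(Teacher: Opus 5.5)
Your proposal is correct and follows the paper's proof step for step. The steps are the weighted Cauchy--Schwarz bound with $\int_{M_{t+\varepsilon/2}}k_r(x,y)\,dy\le\theta_r(x)$, then the lower bound $\theta_r\ge 1-c_\MMM r$, then Fubini, and finally kernel symmetry (valid since $t+\frac{\varepsilon}{2}-r>2r$) to bound the inner integral by $\theta_r(y)\le 1+c_\MMM r$.

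The paper does not address that the lower bound on $\theta_r$ is applied at points of $M_{t+\varepsilon/2-r}$, which need not lie in $M_t$ when $r>\varepsilon/2$. You noticed this, and your justification is the right one: the estimate only needs $B_r(x)\cap\partial M=\emptyset$ and $r$ small relative to $i_0$.
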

\begin{proof}
Let $x\in M_{t+\varepsilon/2-r}.$ Since $t+\varepsilon/2-r>r$, $B_r(x)\cap \partial M=\phi$ and $B_r(x)\cap M_{t+\varepsilon/2-r}\subseteq B_r(x).$ Hence,
\[\int_{M_{t+\varepsilon/2}}k_r(x,y)dy\le \int_{B_r(x)}k_r(x,y)dy=\theta_r(x).\]
By Cauchy-Schwarz inequality,
\begin{align} 
|\Lambda^0_r f(x)|^2 &\le \left(\int_{M_{t+\varepsilon/2}}k_r(x,y)dy\right)\left(\int_{M_{t+\varepsilon/2}} |f(y)|^2 k_r(x,y)\,dy\right)\nonumber\\ &\le\theta_r(x)\int_{M_{t+\varepsilon/2}} |f(y)|^2 k_r(x,y)\,dy.\label{eq:upper_bound_on_integral_operator}
\end{align}

Integrating over $x\in M_{t+\frac{\varepsilon}{2}-r}$  and substituting (\ref{eq:upper_bound_on_integral_operator}) gives
\begin{align}
  \|\Lambda_r f\|_{t+\frac{\varepsilon}{2}-r}^2
&\le \int_{M_{t+\frac{\varepsilon}{2}-r}}\theta_r(x)^{-1}
\int_{M_{t+\frac{\varepsilon}{2}}}|f(y)|^2
k_r(x,y)\,dy\,dx \nonumber\\ 
&\le (1-c_\M r)^{-1}\int_{M_{t+\frac{\varepsilon}{2}}}|f(y)|^2
\int_{M_{t+\frac{\varepsilon}{2}-r}}
k_r(x,y)\,dx\,dy.\nonumber  
\end{align}

Since $t>3r$, we have $t+\frac{\varepsilon}{2}-r>2r$. Using Lemma \ref{eq:kernel_symmetric}, 
\[
\int_{M_{t+\frac{\varepsilon}{2}-r}}
k_r(x,y)\,dx\le \int_{B_r(y)}
k_r(x,y)\,dx=\theta(y)\le (1+c_\M r).
\]
The result follows.
\end{proof}
\begin{lemma}\label{lem:smoothing-error-Mrho}
Let $0<3r<t<\frac{i_0}{2}$. For every $f\in L^2(M_{t+\varepsilon/2})$ and $r<\frac{1}{c_\M}$,
\[
\|\Lambda_r f - f\|_{t+\varepsilon/2-r}^2
\le
\frac{n+2}{\nu_nr^{n}\lbtheta }
E_r^{t-\varepsilon/2-r}(f).
\]
\end{lemma}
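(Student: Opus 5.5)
The plan is to write $\Lambda_r f - f$ pointwise as a normalized kernel average of the difference $f(y)-f(x)$. Then I would apply Cauchy--Schwarz with respect to the probability measure $k_r(x,\cdot)\,dy/\theta_r(x)$. Finally, I would bound the kernel by its sup norm from Lemma \ref{eq:kernel_symmetric}.

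\textbf{Step 1 (pointwise bound).} Fix $x\in M_{t+\varepsilon/2-r}$. Since $t+\varepsilon/2-r>r$, the ball $B_r(x)$ misses $\partial M$, so $k_r(x,\cdot)$ is well defined and supported in $B_r(x)$. By the convention, $f$ is extended by zero outside $M_{t+\varepsilon/2}$, so $\Lambda^0_r f(x)=\int_{B_r(x)}k_r(x,y)f(y)\,dy$. Because $\theta_r(x)=\int_{B_r(x)}k_r(x,y)\,dy$, we have
\[
\Lambda_r f(x)-f(x)=\frac{1}{\theta_r(x)}\int_{B_r(x)}k_r(x,y)\bigl(f(y)-f(x)\bigr)\,dy .
\]
Cauchy--Schwarz with the weight $k_r(x,y)\ge 0$ then gives
\[
|\Lambda_r f(x)-f(x)|^2\le \frac{1}{\theta_r(x)}\int_{B_r(x)}k_r(x,y)\,|f(y)-f(x)|^2\,dy .
\]

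\textbf{Step 2 (kernel and normalization bounds).} By Lemma \ref{eq:kernel_symmetric}(1), $k_r(x,y)\le \frac{n+2}{\nu_n r^n}$; a sharper constant $\frac{n+2}{2\nu_n r^n}$ is also available, but it is not needed. By Lemma \ref{lem:theta-Mrho}(1), $\theta_r(x)\ge 1-c_\M r>0$ whenever $r<1/c_\M$. Combining these,
\[
|\Lambda_r f(x)-f(x)|^2\le \frac{n+2}{\nu_n r^n(1-c_\M r)}\int_{B_r(x)}|f(y)-f(x)|^2\,dy .
\]

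\textbf{Step 3 (integration).} Integrating over $x\in M_{t+\varepsilon/2-r}$, the right-hand side becomes exactly $E_r(f,M_{t+\varepsilon/2-r})$ in the notation of Definition \ref{def:localized-energy-Mrho}. This is the quantity the statement denotes by $E_r^{t-\varepsilon/2-r}(f)$, understood as the energy over the enlarged truncated region containing $M_{t+\varepsilon/2-r}$. Since the integrand is nonnegative, integrating over any larger domain only increases the bound, so the inequality persists for the index as written.

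\textbf{Main obstacle.} The only delicate point is justifying the use of Lemma \ref{lem:theta-Mrho} for $x\in M_{t+\varepsilon/2-r}$ rather than for $x\in M_t$. The proof of that lemma uses only that $B_r(x)$ avoids $\partial M$ and $r<i_0/2$. Here $t+\varepsilon/2-r>2r$ holds because $3r<t$, so the same Jacobian estimates apply verbatim. Everything else is routine.
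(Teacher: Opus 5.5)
Your proposal is correct and follows essentially the same route as the paper: write $\Lambda_r f - f$ as a $\theta_r$-normalized kernel average of $f(y)-f(x)$, apply Cauchy--Schwarz against $k_r(x,\cdot)$, then bound $k_r$ from above by Lemma \ref{eq:kernel_symmetric} and $\theta_r$ from below by Lemma \ref{lem:theta-Mrho}. Your version is in fact slightly more careful than the paper's: you take $x\in M_{t+\varepsilon/2-r}$ rather than only $x\in M_{t+\varepsilon/2}$, and you carry out the integration the paper omits, ending with the comparison $E_r(f,M_{t+\varepsilon/2-r})\le E_r^{t-\varepsilon/2-r}(f)$.
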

\begin{proof}
Fix $x\in M_{t+\varepsilon/2}$.  Since $t>3r$, $B_r(x)$ doesn't intersect the boundary of the manifold $M$. Hence, the support of $\theta_r(x)$ and the kernel $k_r$ at $x$ does not intersect the boundary. We have
\[
\Lambda_r f(x)-f(x)
=
\theta_r(x)^{-1}
\int_{B_r(x)}(f(y)-f(x))k_r(x,y)\,dy.
\]
By Cauchy--Schwarz inequality,
\begin{align}
 |\Lambda_r f(x)-f(x)|^2
&\le \theta_r(x)^{-2}\left(\int_{B_r(x)}k_r(x,y)dy\right)\left(\int_{B_r(x)} |f(y)-f(x)|^2 k_r(x,y)\,dy\right) \nonumber\\
&=\theta_r(x)^{-1}
\int_{B_r(x)}|f(y)-f(x)|^2 k_r(x,y)\,dy \nonumber\\
&\le \frac{n+2}{\nu_nr^{n}\lbtheta }\int_{B_r(x)}|f(y)-f(x)|^2 \label{eq:theta_bound_and_kernel_bound}
\end{align}
where (\ref{eq:theta_bound_and_kernel_bound}) follows from Lemma \ref{lem:theta-Mrho} and Lemma \ref{eq:kernel_symmetric}.
\end{proof}
We state the bound from Lemma 4.6 of \cite{MaityBhattacharya2026} in our setting for compactly supported functions. Since $B_r(x)\cap\partial M=\varnothing$ for all $x\in M_{t+\varepsilon/2}$, the estimates in \cite{MaityBhattacharya2026} apply in the present setting.
\begin{lemma} \label{lem:upper bound on dlambda}
Let $0<r<2\rho<i_0 \text{ and } t>3r$. For every $f\in L^2(M_{t+\frac{\varepsilon}{2}}),$
\[\|d(\Lambda_rf)\|^2_{t+\varepsilon/2-r}\le \frac{n+2}{\nu_{n}r^{n+2}} \frac{(1+ 2c_\MMM r)^2}{(1-c_\MMM r)^3
     }E_r^{t+\varepsilon/2-r}(f).\]
\end{lemma}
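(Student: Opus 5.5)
We prove the bound by differentiating $\Lambda_r f=\Lambda^0_r f/\theta_r$ directly, following Lemma 4.6 of \cite{MaityBhattacharya2026}. Every $x\in M_{t+\varepsilon/2-r}$ satisfies $d(x,\partial M)>t-r>2r$, so $B_r(x)$ never meets $\partial M$. Hence the kernel gradient formula, the bounds of Lemma~\ref{lem:theta-Mrho}, and the symmetry in Lemma~\ref{eq:kernel_symmetric} hold at every point used.

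\textbf{Step 1: gradient formula.} Since $\int k_r(x,y)\,dy=\theta_r(x)$, we can write
\[
\Lambda_r f(x)=f(x)+\theta_r(x)^{-1}\int_{B_r(x)}k_r(x,y)\bigl(f(y)-f(x)\bigr)\,dy .
\]
The term $f(x)$ is constant in the $x$-derivative of the integrand in $y$. It is cleaner to freeze the base value: for fixed $x_0$, $\Lambda_r f-f(x_0)=\theta_r^{-1}\int k_r(\cdot,y)(f(y)-f(x_0))\,dy$. Differentiating at $x_0$ gives
\[
d(\Lambda_r f)(x_0)=\theta_r^{-1}\int_{B_r(x_0)} d_x k_r(x_0,y)\,(f(y)-f(x_0))\,dy-\frac{d\theta_r(x_0)}{\theta_r(x_0)^2}\int_{B_r(x_0)} k_r(x_0,y)(f(y)-f(x_0))\,dy .
\]
Call the two terms $A_1$ and $A_2$.

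\textbf{Step 2: bounding $A_1$.} The gradient formula gives $|d_xk_r(x,y)|=\frac{n+2}{\nu_n r^{n+2}}\,d(x,y)$. We apply Cauchy--Schwarz with weight $|d_x k_r|$ and estimate
\[
\int_{B_r(x)}d(x,y)\,dy\le(1+c_\M r)\frac{n\nu_n r^{n+1}}{n+1}.
\]
This volume estimate follows from Bishop--Gromov under $|Ric|\le\lambda$ and $r<i_0$, as in \cite{MaityBhattacharya2026}. For the dimensional constant to come out exactly as $\frac{n+2}{\nu_n r^{n+2}}$, we use the sharper estimate
\[
\Bigl|\int d_xk_r\,(f(y)-f(x))\Bigr|^2\le \frac{n+2}{\nu_nr^{n+2}}(1+c_\M r)\int_{B_r(x)}|f(y)-f(x)|^2\,dy,
\]
which, as in Lemma 4.6 of the cited work, is obtained by testing against unit covectors $\xi$ and using
\[
\int_{B_r}\langle \exp_x^{-1}y,\xi\rangle^2\,dy\le(1+c_\M r)\frac{\nu_n r^{n+2}}{n+2}.
\]
Multiplying by $\theta_r^{-2}\le(1-c_\M r)^{-2}$ gives
\[
|A_1|^2\le \frac{n+2}{\nu_n r^{n+2}}\frac{1+c_\M r}{(1-c_\M r)^2}\int_{B_r(x)}|f(y)-f(x)|^2\,dy .
\]

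\textbf{Step 3: bounding $A_2$.} Here $|d\theta_r|\le c_\M$, and Cauchy--Schwarz together with the kernel bound from Lemma~\ref{eq:kernel_symmetric} gives
\[
|A_2|^2\le \frac{c_\M^2}{(1-c_\M r)^3}\,\frac{n+2}{\nu_n r^n}\int_{B_r(x)}|f(y)-f(x)|^2\,dy .
\]
This is smaller than the bound for $A_1$ by a factor $O(r^2)$. We combine the two with $|A_1+A_2|^2\le(1+r)|A_1|^2+(1+r^{-1})|A_2|^2$; the $A_2$ contribution is then of relative size $O(r)$ and is absorbed into the factor $(1+2c_\M r)^2/(1-c_\M r)^3$ after enlarging $c_\M$.

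\textbf{Step 4: integration.} We integrate the pointwise bound over $x\in M_{t+\varepsilon/2-r}$, with $f$ extended by zero. This yields $E_r^{t+\varepsilon/2-r}(f)$ times the stated constant.

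\textbf{Main obstacle.} The delicate point is Step 2, getting the sharp constant $\frac{n+2}{\nu_n r^{n+2}}$ rather than a crude dimensional one. This requires the second-moment Jacobian comparison for $\exp_x$ under Ricci bounds, which is exactly the content carried over from \cite{MaityBhattacharya2026}. A secondary point is that $f$ is only $L^2$, so differentiation under the integral should be justified by Lipschitz continuity of $k_r$ in $x$ together with dominated convergence.
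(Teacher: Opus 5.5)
Your proposal is correct and follows essentially the same route as the paper, whose proof just transfers Lemma 4.6 of \cite{MaityBhattacharya2026} after noting that $B_r(x)\cap\partial M=\varnothing$ for every relevant $x$. Your Steps 1--4 reconstruct exactly that interior argument together with the same boundary-avoidance check: the gradient identity with a frozen constant, the sharp covector bound $\int_{B_r(x)}\langle \exp_x^{-1}y,\xi\rangle^2\,dy\le(1+c_\M r)\frac{\nu_n r^{n+2}}{n+2}$, absorbing the $d\theta_r$ term as an $O(r)$ relative correction, and integrating with $f$ extended by zero. The preliminary Cauchy--Schwarz with weight $|d_xk_r|$ is unnecessary, since it would cost an extra factor $\frac{n(n+2)}{n+1}$, and you rightly replace it with the covector estimate.
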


\section{Interpolation map}

To compare functions on the discretization $X_t$ with functions on the manifold, we introduce an interpolation map that lifts discrete functions to Lipschitz functions on $M$. The map is obtained by first extending a function on $X_t$ to the manifold using the operator $P^*$ and then applying the smoothing operator $\Lambda$. This construction will allow us to relate the discrete Dirichlet energy to the continuous Dirichlet energy on the manifold.
\begin{definition}
The interpolation map $I:H^1_0(X_t)\rightarrow C^{0,1}(M_{t+2\varepsilon-\rho})$
is defined by
\[
Iu=\Lambda_{\rho-2\varepsilon}P^*u.
\]
\end{definition}
For $u\in H^1_0(X_t)$, the support of $Iu$ is contained in 
$M_{t+\frac{5\varepsilon}{2}-\rho}\subseteq M_{t+2\varepsilon-\rho}$. Since $\|Iu\|_{t+\frac{5\varepsilon}{2}-\rho}=\|Iu\|_{t+2\varepsilon-\rho}$, 
we use the latter notation for simplicity.

From Lemma  and the fact that $P^*$ preserves the norm, we have \begin{equation*}
 \norm{Iu}_{t+2\varepsilon-\rho}\leq \left(\frac{1+
 c_\MMM (\rho-2\varepsilon)}{1-c_\MMM(\rho-2\varepsilon)}\right)\norm{u}_{t}.  
\end{equation*}
\begin{lemma}
\label{lem:interpolation}For $0<2\varepsilon<\rho<3\rho<t$ , $c_\MMM \rho<\frac{1}{4}$ and $\frac{2\epsilon}{\rho}<\frac{1}{n}$, let $u\in H^1_0(X_t)$. Then
    \begin{enumerate}
        \item $|\|Iu\|_{t+2\varepsilon-\rho}-\|u\|_{t}|\le \frac{3\rho^2}{1-c_{\MMM}\rho} \|\delta u\|$;
        \item $\|d(Iu)\|_{t+2\varepsilon-\rho}\le \left(1-\frac{2\epsilon}{\rho}\right)^{-\frac{n}{2}-1} \frac{1+ 2c_\MMM \rho}{(1-c_\MMM \rho)^\frac{3}{2}}\|\delta u\|.$

    \end{enumerate}
\end{lemma}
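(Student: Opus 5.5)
Set $r=\rho-2\varepsilon$ and $f=P^*u$, so that $Iu=\Lambda_r f$ and $f\in L^2(M_{t+\varepsilon/2})$ by the cylindrical structure of the Voronoi cells. Both estimates reduce to bounding the average dispersion $E_r(f)$ of the piecewise-constant function $P^*u$ by the discrete energy $\|\delta u\|^2$. The key combinatorial observation is this: if $x\in V_i$, $y\in V_j$ and $d(x,y)<r=\rho-2\varepsilon$, then $d(x_i,x_j)\le d(x,y)+2\varepsilon<\rho$, so $x_i\sim x_j$. Hence
\[
E_r^{t+\varepsilon/2-r}(P^*u)\le\sum_{x_i\sim x_j}\mu_i\mu_j|u(x_i)-u(x_j)|^2=\frac{\nu_n\rho^{n+2}}{n+2}\|\delta u\|^2 .
\]
Here we integrate $x$ over a neighbourhood of the support, so that all pairs with at least one endpoint in $X_t$ are captured. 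Pairs with both values zero contribute nothing, consistent with the convention of extending $u$ by zero. Because $t>3\rho>3r$, every ball $B_r(x)$ involved stays away from $\partial M$, so Lemma \ref{lem:smoothing-error-Mrho} and Lemma \ref{lem:upper bound on dlambda} apply to $f$.

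For (2), I would insert this bound into Lemma \ref{lem:upper bound on dlambda}:
\[
\|d(Iu)\|^2\le \frac{n+2}{\nu_n r^{n+2}}\frac{(1+2c_\MMM r)^2}{(1-c_\MMM r)^3}\cdot\frac{\nu_n\rho^{n+2}}{n+2}\|\delta u\|^2
=\Big(\frac{\rho}{\rho-2\varepsilon}\Big)^{n+2}\frac{(1+2c_\MMM r)^2}{(1-c_\MMM r)^3}\|\delta u\|^2 .
\]
Taking square roots, and using $r\le\rho$ to replace $c_\MMM r$ by $c_\MMM\rho$ monotonically, gives exactly the factor $(1-2\varepsilon/\rho)^{-n/2-1}(1+2c_\MMM\rho)(1-c_\MMM\rho)^{-3/2}$.

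For (1), since $\|P^*u\|=\|u\|_t$ by Lemma \ref{lem:P-properties-Mrho}, the triangle inequality gives $|\|Iu\|-\|u\|_t|\le\|\Lambda_r f-f\|$. The norm must be taken on the region containing both supports, and $M_{t+2\varepsilon-\rho}$ contains $\operatorname{supp}f$. Lemma \ref{lem:smoothing-error-Mrho} combined with the dispersion bound yields
\[
\|\Lambda_rf-f\|^2\le\frac{n+2}{\nu_n r^n(1-c_\MMM r)}\cdot\frac{\nu_n\rho^{n+2}}{n+2}\|\delta u\|^2=\frac{\rho^2}{1-c_\MMM r}\Big(\frac{\rho}{\rho-2\varepsilon}\Big)^n\|\delta u\|^2 .
\]
The hypothesis $2\varepsilon/\rho<1/n$ gives $(1-2\varepsilon/\rho)^{-n}\le(1-1/n)^{-n}\le 4$ for $n\ge2$, and $c_\MMM\rho<1/4$ controls $1-c_\MMM r$. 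This yields a bound of the form $\|\Lambda_rf-f\|\le C\rho\,\|\delta u\|(1-c_\MMM\rho)^{-1/2}$, which I would then massage into the stated constant. If the stated $\rho^2$ is meant as a squared-norm bound, the same computation gives $3\rho^2/(1-c_\MMM\rho)$ after absorbing $4/(1-c_\MMM\rho)$ into the constant.

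The main obstacle is bookkeeping near the truncation boundary. Lemma \ref{lem:smoothing-error-Mrho} controls the error only on the stated domain, whereas $\Lambda_rf$ spreads mass up to distance $r$ outside $M_{t+\varepsilon/2}$, into points $x$ where $f(x)=0$. I would handle this by noting that for such $x$ the same Cauchy--Schwarz argument still applies, because $B_r(x)$ misses $\partial M$ (as $t+2\varepsilon-\rho>r$). Summing over all these $x$ then still gives a piece of $E_r(f)$ on the larger domain, and the combinatorial bound above covers that piece as well.
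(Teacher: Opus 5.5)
Your proposal follows the paper's proof essentially step for step. Both arguments reduce everything to $E_{\rho-2\varepsilon}(P^*u)\le\frac{\nu_n\rho^{n+2}}{n+2}\|\delta u\|^2$, obtained from $d(x_i,x_j)\le d(x,y)+2\varepsilon<\rho$, and then insert this into the smoothing-error lemma after $\bigl|\|Iu\|-\|u\|_t\bigr|\le\|Iu-P^*u\|$ for (1), and into the gradient bound for $\Lambda_r$ for (2). Your squared reading of (1) is what the paper's proof actually establishes, namely $\|Iu-P^*u\|^2\le\frac{3\rho^2}{1-c_{\MMM}\rho}\|\delta u\|^2$. Your observation that $(1-2\varepsilon/\rho)^{-n}$ is only bounded by $(1-1/n)^{-n}$ (which is $4$ for $n=2$ and below $3$ only once $n\ge 6$) correctly shows the constant $3$ is unjustified in low dimensions, though its exact value is irrelevant downstream.
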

\begin{proof}
   For  $u\in H^1_0(X_t)$, using the fact that $P^*$ preserves the norm and $2\varepsilon<\rho\implies M_{t+\frac{\varepsilon}{2}}\subseteq M_{t+2\varepsilon-\rho}$,
   \[|\|Iu\|_{t+2\varepsilon-\rho}-\|u\|_{t}|=\big|\norm{Iu}_{t+2\varepsilon-\rho}-\norm{P^*u}_{t+2\varepsilon-\rho}\big|\le\|Iu-P^*u\|_{t+2\varepsilon-\rho}.\]
 From Lemma \ref{lem:smoothing-error-Mrho}   \begin{equation}
 \label{eq:upper bound on Iu-Pu}
   \|Iu-P^*u\|^2_{t+2\varepsilon-\rho}\le\frac{n+2}{\nu_{n}(1-c_\MMM(\rho-2\epsilon))(\rho-2\epsilon)^{n}} E_{\rho-2\varepsilon}^{t+2\varepsilon-\rho}(P^*u). 
 \end{equation}
 The Dirichlet energy of $u$ when calculated on $M_{t+2\varepsilon-\rho}$ is
 \[  \|\delta u\|^2=\frac{n+2}{\nu_n \rho^{n+2}}\sum_{x_i\in X}\sum_{j:x_i\sim x_j}\mu_i\mu_j|u(x_j)-u(x_i)|^2.  \]
 Since, $t>2\rho$, the edges with one endpoint in $\partial M$ cannot have another endpoint in $X_t$. Hence, these endpoints are not in the support of $u$ and they don't contribute to the Dirichlet energy. For $x\in V_i$, let $U(x):=\bigsqcup_{j:x_j\sim x_i}V_j$. 
 Since $t>2\rho>\varepsilon$ and $u\in H^1_0(M_t), $ for any $x\in M\setminus M_\varepsilon$, $P^*u|_{U(x)}\equiv 0.$ Hence, 
 \begin{equation}
  \norm{\delta u}^2=\frac{n+2}{\nu_n \rho^{n+2}}\int_{M_{\varepsilon}}\int_{U(x)}|P^*u(y)-P^*u(x)|^2\,dy\,dx. \label{eq:del u lower bound}   
 \end{equation}

By triangle inequality, for all $y\in B_{\rho-2\varepsilon}(x)$, $d(x_i,x_j)\le d(x,x_i)+d(x,y)+d(y,x_j)\le \rho. $ Hence, $B_{\rho-2\varepsilon}(x)\subseteq U(x).$ From (\ref{eq:del u lower bound}),
\begin{align}
        \|\delta u\|^2&\ge \frac{n+2}{\nu_n \rho^{n+2}}\int_{M_{\varepsilon}}\int_{B_{\rho-2\varepsilon}(x)}|P^*u(y)-P^*u(x)|^2\,dy\,dx\nonumber \\
    &= \frac{n+2}{\nu_n \rho^{n+2}}E^{\varepsilon}_{\rho-2\varepsilon}(P^*u). \label{eq:del_u_bound_dispersion}
\end{align}

This implies, from (\ref{eq:upper bound on Iu-Pu}),
\begin{align}
    \|Iu-P^*u\|^2_{t+2\varepsilon-\rho}&\le\frac{n+2}{\nu_{n}(1-c_\MMM(\rho-2\epsilon))(\rho-2\epsilon)^{n}}E_{\rho-2\varepsilon}^{t+2\varepsilon-\rho}(P^*u)\nonumber\\
    &\le \frac{n+2}{\nu_{n}(1-c_\MMM(\rho-2\epsilon))(\rho-2\epsilon)^{n}}E_{\rho-2\varepsilon}^{\varepsilon}(P^*u)\nonumber\\ &\le \frac{3\rho^2}{1-c_{\MMM}\rho}\|\delta u\|^2 \nonumber
\end{align}

which proves $(1)$.\\
From Lemma \ref{lem:upper bound on dlambda} and (\ref{eq:del_u_bound_dispersion}),
\[\|d(Iu)\|_{t+2\varepsilon-\rho}^2\le \frac{n+2}{\nu_{n}r^{n+2}} \frac{(1+ 2c_\MMM r)^2}{(1-c_\MMM r)^3
     }E_{\rho-2\varepsilon}^{t+2\varepsilon-\rho}(P^*u)\le \left(\frac{\rho}{\rho-2\varepsilon}\right)^{n+2}\frac{(1+ 2c_\MMM r)^2}{(1-c_\MMM r)^3
     }\|\delta u\|^2.\]

\end{proof}
\begin{theorem}
\label{thm:lower_bound}
Let $0<\varepsilon<2\rho< t<s_0<\frac{i_0}{2}$, $M\in \M$ and $\Gamma^t_{\varepsilon,\rho}$ be an $(\varepsilon,\rho)$-proximity graph on $M$. 
Let $\lambda_k(M_{t+2\varepsilon-\rho})$ and $\lk(\Gamma^t_{\varepsilon,\rho})$ denote the $k$--th Dirichlet eigenvalues of $-\Delta$ on $M_{t+2\varepsilon-\rho}$ and $\Delta_t$ respectively.  There exists positive constants $C_n$ and $C_{\M,k}$ such that for any $\rho,\frac{\varepsilon}{\rho}<\frac{1}{C_n}$,
\[
\lambda_k(\Gamma^t_{\varepsilon,\rho})
\;\ge\;
\left(1+C_{\M,k}\left(\frac{\varepsilon}{\rho}+\rho\right)\right)\lk(M_{t+2\varepsilon-\rho}).
\]

\end{theorem}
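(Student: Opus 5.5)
The plan is to follow the min-max scheme of Theorem \ref{prop:upper-bound-Mrho}, with the roles of manifold and graph swapped. The interpolation map $I=\Lambda_{\rho-2\varepsilon}P^*$ lifts test spaces from the graph to $H^1_0(M_{t+2\varepsilon-\rho})$. By the min-max principle on the manifold it suffices to produce a $k$-dimensional subspace $\widetilde L\subset H^1_0(M_{t+2\varepsilon-\rho})$ on which the Rayleigh quotient $\|df\|^2/\|f\|^2$ is at most $(1+C'(\frac{\varepsilon}{\rho}+\rho))\lk(\Gamma^t_{\varepsilon,\rho})$. The final step rewrites this comparison in the form displayed in Theorem \ref{thm:lower_bound}.

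\textbf{Step 1 (test space).} Let $L\subset H^1_0(X_t)$ be the span of the first $k$ orthonormal eigenvectors of $-\Delta_t$. By Lemma \ref{lem:discrete-green}, $\|\delta u\|^2\le \lk(\Gamma^t_{\varepsilon,\rho})\|u\|_t^2$ for all $u\in L$. Set $\widetilde L:=I(L)$. The support statement after the definition of $I$ gives $\operatorname{supp} Iu\subseteq M_{t+\frac{5\varepsilon}{2}-\rho}$, and $Iu$ is Lipschitz, so $\widetilde L\subset H^1_0(M_{t+2\varepsilon-\rho})$.

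\textbf{Step 2 (injectivity and norm control).} We need an a priori bound on $\lk(\Gamma^t_{\varepsilon,\rho})$ depending only on $\M$ and $k$. This follows from Theorem \ref{prop:upper-bound-Mrho} combined with Cheng's upper bound \cite{cheng} for $\lk(M_{t+\varepsilon})$. Lemma \ref{lem:interpolation}(1) then gives
\[
\|Iu\|\ge \|u\|_t-\tfrac{3\rho^2}{1-c_\M\rho}\|\delta u\|\ge\bigl(1-C_{\M,k}\rho^2\bigr)\|u\|_t .
\]
Hence $I$ is injective on $L$ for $\rho<1/C_{\M,k}$, so $\dim\widetilde L=k$.

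\textbf{Step 3 (energy).} Lemma \ref{lem:interpolation}(2) gives
\[
\|d(Iu)\|^2\le \Bigl(1-\tfrac{2\varepsilon}{\rho}\Bigr)^{-n-2}\frac{(1+2c_\M\rho)^2}{(1-c_\M\rho)^3}\,\|\delta u\|^2 .
\]
For $\frac{\varepsilon}{\rho},\rho<1/C_n$ the prefactor is at most $1+C_n(\frac{\varepsilon}{\rho}+\rho)$. Dividing by the bound from Step 2 yields
\[
\frac{\|d(Iu)\|^2}{\|Iu\|^2}\le\bigl(1+C(\tfrac{\varepsilon}{\rho}+\rho)\bigr)\lk(\Gamma^t_{\varepsilon,\rho}),
\]
so min-max gives $\lk(M_{t+2\varepsilon-\rho})\le(1+C(\frac{\varepsilon}{\rho}+\rho))\lk(\Gamma^t_{\varepsilon,\rho})$.

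\textbf{Step 4 (matching the stated form).} Invert the factor, $(1+Cx)^{-1}$ with $x=\frac{\varepsilon}{\rho}+\rho$, to obtain the lower bound on $\lk(\Gamma^t_{\varepsilon,\rho})$ with a factor of the form $1+C_{\M,k}x$. Here $C_{\M,k}$ must be understood as the signed constant coming from this inversion, since $(1+Cx)^{-1}\ge 1-Cx$.

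I expect this last step to be the main obstacle. Making the correction factor literally $1+C_{\M,k}(\frac{\varepsilon}{\rho}+\rho)$ with $C_{\M,k}>0$ would require the energy inequality of Step 3 to hold with a prefactor below $1$. Nothing among the earlier lemmas provides that. So I would first check whether the constant in the statement is intended to absorb a sign. If it is not, the argument above delivers exactly $\lk(\Gamma^t_{\varepsilon,\rho})\ge(1+C(\frac{\varepsilon}{\rho}+\rho))^{-1}\lk(M_{t+2\varepsilon-\rho})$, and that is the form in which the theorem gets used in Theorem \ref{thm:main}.
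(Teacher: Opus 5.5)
Your proposal is correct and follows the paper's proof: the same test space $I(W)$ built from the first $k$ eigenvectors of $-\Delta_t$, injectivity and norm control from Lemma~\ref{lem:interpolation}(1), the energy bound from Lemma~\ref{lem:interpolation}(2), and min-max on $M_{t+2\varepsilon-\rho}$; you also usefully make explicit the a priori bound on $\lk(\Gamma^t_{\varepsilon,\rho})$, which the paper's injectivity claim needs but leaves implicit. Your Step 4 concern is justified: these lemmas only give $\lk(M_{t+2\varepsilon-\rho})\le(1+C(\frac{\varepsilon}{\rho}+\rho))\lk(\Gamma^t_{\varepsilon,\rho})$, so the paper's own first display (with the factor $(1+C(\cdot))^{-1}$) is unsupported and the ``$+$'' in the statement is a sign slip --- the correct conclusion is the $(1-C_{\M,k}(\frac{\varepsilon}{\rho}+\rho))$ form that appears in Theorem~\ref{thm:main}.
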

\begin{proof}
    The proof follows similar arguments as in Theorem \ref{prop:upper-bound-Mrho}. 
    For a subspace $L\subset H^1_0(M_{t+2\varepsilon-\rho})$ with $\dim L=k$, if
\[
\sup_{0\neq f\in L}
\frac{\|df\|^2}{\|f\|_{t+2\varepsilon-\rho}^2}
\le
\left(1+C_{\M,k}\left(\frac{\varepsilon}{\rho}+\rho\right)\right)^{-1}\lk(\Gamma^t_{\varepsilon,\rho})
\]
then by the min-max principle for $-\Delta$, the conclusion follows.\\
Let $W\subset H_0^1(X_{t})$ be the $k$-dimensional subspace spanned by the first $k$ orthonormal Dirichlet eigenfunctions of $-\Delta_t$.
Then for all $u\in W$, $\|\delta u\|^2
\le
\lambda_k(\Gamma^t_{\varepsilon,\rho})\,\|u\|_{t}^2.$
Let $L := I(W)\subset H^1_0(M_{t+2\varepsilon-\rho}).$   
Using Lemma~\ref{lem:interpolation},  $I$ is injective on $W$ for $\rho<\frac{1}{C_{\M,k}}$.

Let $f\in L\setminus\{0\}$ and choose $u\in W$ such that $f=Iu$. 
We then use Lemma \ref{lem:interpolation} to estimate the Rayleigh quotient of $f$ and the result follows.

\end{proof}
\textbf{Proof of Theorem \ref{thm:main}} Combining the estimates obtained from Theorem \ref{prop:upper-bound-Mrho} and Theorem \ref{thm:lower_bound}, we obtain the required comparison.\\

\textbf{Proof of Corollary \ref{corollary}} The proof follows from Theorem \ref{thm:main}, domain monotonicity of Dirichlet eigenvalues, 
 and the fact that for a monotone sequence of truncated submanifolds of $M$, by multiplying the eigenfunctions with cut-off functions and using min-max principle, we have $\lim_{\varepsilon,\rho\to 0}\lk(M_{t+2\varepsilon-\rho})=\lk(M_t)$ and $\lk(M_{t+\varepsilon})=\lk(M_t)$.  \\

 \textbf{Proof of Proposition \ref{proposition}} The proof follows from the domain monotonicity and the arguments used in the proof of Corollary \ref{corollary}.

\section*{Acknowledgement}
I am grateful to my advisor, Dr. Soma Maity, for introducing me to this problem and for the insightful discussions and helpful guidance over the years. I acknowledge the financial support provided by the ``Council of Scientific and Industrial Research (CSIR), India." The results in this paper were obtained during the author’s Ph.D. at the Indian Institute of Science Education and Research (IISER), Mohali, and were a part of the author’s dissertation.

\bibliographystyle{plain}
\bibliography{sample}
\end{document}